\documentclass[a4paper,11pt]{article}

\usepackage{amsmath,amssymb,amsthm,mathtools}
\usepackage{newtxtext,newtxmath}
\usepackage{bm,booktabs}
\usepackage{geometry}

\usepackage{natbib}
\newtheorem{teiri}{Theorem}
\newtheorem{hodai}{Lemma}
\newtheorem{prop}{Proposition}
\newtheorem{kei}{Corollary}
\newtheorem{rei}{Example}

\title{Singularities of Multidimensional Item Response Models and the Rank of the Fisher Information Matrix}
\author{Joe Suzuki}

\begin{document}
\maketitle

\begin{abstract}
Statistical models in item response theory are known to be singular. This singularity manifests itself as a loss of rank in the Fisher information matrix, and the usual theories underlying AIC and BIC, which assume regularity, cannot be applied without further justification. Chen and colleagues analyzed this degeneracy and nonregular likelihood ratio tests, but their analysis does not identify all the directions that produce degeneracy or their dimensions. In this study, we analyze the entire kernel of the Fisher information matrix as a linear space for multidimensional item response models, separating the degeneracy due to rotations from that due to an over-specified latent dimension without double counting. We determine the dimension of each component and the exact rank of the information matrix, and prove a formula for computing, from the item parameters, the dimension of the degeneracy that remains after these components are removed. These results go beyond establishing the existence of singularity to identifying its causes and extent, providing a foundation for inference and model comparison in IRT.

\noindent\textbf{Keywords:} multidimensional item response theory, 2PL model, singular statistical models, Fisher information matrix, orthogonal decomposition of the kernel, rank deficiency
\end{abstract}

\section{Introduction}
Item response theory (IRT) is a framework of probabilistic models that explain responses to individual test or questionnaire items in terms of respondents' abilities or traits and the properties of the items. Because abilities and traits cannot be observed directly, they are treated as latent variables and estimated from responses to multiple items. IRT is used to measure educational achievement, assess traits using psychological scales, and measure health status from patient-reported outcomes\citep{EmbretsonReise2000,CellaEtAl2010PROMIS}. It is also used in adaptive testing, where the next item is selected according to the responses already obtained\citep{Reckase2009MIRT}.

Multidimensional IRT is used when several abilities or traits contribute to responses. The multidimensional two-parameter logistic (2PL) model considered in this paper describes binary responses, such as correct or incorrect answers and affirmative or negative responses. Let $M$ be the number of items, $i$ the number of ability dimensions, $\theta\in\mathbb R^i$ a respondent's latent ability, and $X_\ell\in\{-1,1\}$ the response to item $\ell$. Then
\[
P(X_\ell=1\mid\theta)=\sigma(a_\ell^\top\theta-b_\ell),
\qquad
\sigma(u)=\frac{1}{1+e^{-u}}
\]
The discrimination vector $a_\ell$ describes how each ability direction affects the response, while the difficulty parameter $b_\ell$ controls how difficult it is to obtain an affirmative response. Responses to different items are assumed to be independent conditional on ability.

Because ability is unobserved, item parameters are estimated using the marginal response distribution obtained by integrating over the ability distribution\citep{BockAitkin1981}. In this paper, the ability distribution is standard normal. The Fisher information matrix of the marginal distribution describes how well changes in the item parameters can be detected from responses. When this matrix is singular, there is a nonzero parameter direction along which the first-order changes in all marginal response probabilities vanish. The set of these directions is the kernel of the Fisher information matrix.

Degeneracy of the Fisher information matrix reflects singularity of the statistical model. This problem is not specific to IRT. For singular statistical models, the usual AIC correction for generalization error and the BIC approximation to the Bayesian marginal likelihood cannot be justified on the same grounds as for regular models\citep{Watanabe2010WAIC,DrtonPlummer2017sBIC}. Before applying these criteria mechanically, it is therefore necessary to examine whether degeneracy remains after identifiability constraints are imposed.

Let $A\in\mathbb R^{M\times i}$ be the matrix whose rows are the discrimination vectors, and let $j$ be the rank of its true value $A_*$. Under standard normal ability, rotating the ability coordinates leaves the marginal response distribution unchanged, and parameter directions corresponding to rotations belong to the kernel of the Fisher information matrix. When $j<i$, further degeneracy arises from ability directions that are unused at the true value. These two causes do not, however, account for all degeneracy. Even after the effects of rotations are removed and $i=j$, degeneracy may remain depending on the correspondence between items and ability directions. This paper aims to separate these causes and determine the entire kernel of the Fisher information matrix, including the degeneracy that remains.

\subsection{Previous Research on Item Response Models and Singularity}
In multidimensional IRT, models are compared using AIC\citep{Akaike1974Identification} and BIC\citep{Schwarz1978Dimension} to select the number of ability dimensions and their correspondence with items. Examples include comparisons of one-factor, correlated-factor, and bifactor models for ordinal responses\citep{ImmekusSnyderRalston2019MIRT}, and comparisons of the number of factors for nominal responses\citep{RevueltaMaydeuXimenez2020Nominal}. Comparisons of predictive performance in multidimensional IRT using DIC, WAIC, and cross-validation have also been studied\citep{FujimotoFalk2024ModelFit}.

Identifiability concerns whether item parameters can be distinguished from the response distribution. Under a known nondegenerate ability distribution and positive discrimination parameters, \citet{SanMartinRolinCastro2013} established identifiability of the one-factor 2PL model with at least three items. For the standard probit bifactor model, which has a general factor and factors specific to item groups, \citet{FangGuoXuYingZhang2021} gave necessary and sufficient conditions for identifiability in terms of the number of items and the loading structure. These results cannot, however, be applied directly to the logistic model with normal ability integrated out.

Nonregularity due to an over-specified latent dimension has been studied in likelihood ratio tests for the number of factors. \citet{HayashiBentlerYuan2007}, building on the observations of \citet{TakaneVanDerHeijdenBrowne2003}, discussed degeneracy of the information matrix due to over-specification of the number of factors in exploratory factor analysis with continuous responses. For binary item response models, \citet{ChenMoustakiZhang2020} analyzed degeneracy of the Fisher information matrix and nonregularity of likelihood ratio tests when a one-factor model is embedded in a two-factor model.

These results do not determine the entire kernel of the Fisher information matrix or its exact dimension for general $i,j$. In particular, considering only rotations and an over-specified dimension does not capture degeneracy arising from other causes. This paper classifies the kernel and computes the rank, including the remaining degeneracy that depends on the item structure. The questions left open by \citet{ChenMoustakiZhang2020} are discussed in Section 2.2.

\subsection{Contributions}
This paper makes two contributions.

First, for general $1\leq j\leq i\leq M$, we give the following orthogonal decomposition of the kernel of the Fisher information matrix $I_*$ at the true value:
\[
\ker I_*=\mathcal R_{i,j}\oplus\mathcal N_{i,j}\oplus\mathcal E_{i,j}
\]
Here, $\mathcal R_{i,j}$ consists of directions due to rotations, $\mathcal N_{i,j}$ consists of directions associated with the over-specified latent dimension that cannot be represented by rotations, and $\mathcal E_{i,j}$ consists of nonrotational directions remaining within the $j$-dimensional model. We determine the dimension of each component and, writing $\kappa_*:=\dim\mathcal E_{i,j}$, prove that
\[
\operatorname{rank}I_*=Mj+M-\frac{j(j-1)}2-\kappa_*
\]
This counts the causes and dimensions of degeneracy without overlap and shows that, for fixed true item parameters, the rank does not depend on the candidate dimension $i$.

Second, we compute the dimension $\kappa_*$ of the remaining degeneracy from the item parameters. After a rotation, write $A_*=[B,0]$, $B\in\mathbb R^{M\times j}$, and let $b_*$ be the true difficulty vector. We explicitly construct a matrix $L_j(B,b_*)$ with $M$ columns and prove that
\[
\kappa_*=M-\operatorname{rank}L_j(B,b_*)
\]
Furthermore, when the structure of $B$ is such that each item corresponds to at most one ability direction, we show, for arbitrary difficulties, that $\kappa_*$ equals the number of ability directions corresponding to one or two items. For this item structure, having at least three items corresponding to each ability direction is necessary and sufficient for eliminating the remaining degeneracy.

Section 2 defines the model and the Fisher information matrix. Section 3 gives the orthogonal decomposition of the kernel and the rank formula, and Section 4 gives the formula for computing $\kappa_*$ and evaluates it for particular item structures. Section 5 discusses the results and future research.

\section{Preliminaries}
This section provides the background needed to understand the main results in Sections 3 and 4. We first define the Fisher information matrix and then explain the questions arising from previous research and the motivation for this study.

\subsection{Singularity of Statistical Models and the Fisher Information Matrix}

Let $M$ be the number of items and $i$ the latent dimension allowed by the candidate model, where $1\leq i\leq M$. We express the candidate model $\mathcal M_i$ using ability $\theta\in\mathbb R^i$ and a loading matrix $A\in\mathbb R^{M\times i}$. The ability distribution is $N(0,I_i)$, whose density is denoted by $\varphi_i$, where $I_i$ is the identity matrix of size $i$.
Let $X=[X_1,\ldots,X_M]^\top\in\{-1,1\}^M$ be the response vector of a single examinee, with $X_\ell=1$ denoting a correct response and $X_\ell=-1$ an incorrect response. Write the $\ell$th row of $A$ as $a_\ell^\top\in {\mathbb R}^i$, and let $b=[b_1,\ldots,b_M]^\top$ be the difficulty vector. Conditional on ability $\theta$, responses to different items are independent, and the conditional response probability is
\[
p(x\mid\theta,A,b)=\prod_{\ell=1}^M\sigma\{x_\ell(a_\ell^\top\theta-b_\ell)\},\qquad \sigma(u)=\frac1{1+e^{-u}}
\]
We define the marginal response probability obtained by integrating out ability as
\begin{equation}\label{eq:irt_marg_response}
p_i(x\mid A,b):=\int_{\mathbb R^i}
p(x\mid\theta,A,b)
\varphi_i(\theta)\,d\theta
\end{equation}
The likelihood used below is based on this marginal distribution.

Let $A_*$ be the true loading matrix and $b_*$ the true difficulty vector, and set
\[
j=\operatorname{rank}A_*,\qquad 0\leq j\leq i
\]
Here, $i$ is the latent dimension allowed by the candidate model, and $j$ is the number of independent ability directions in the true loading matrix. For example, when $i=2$ and $j=1$, the fitted model allows two ability directions, but only one direction contributes to responses at the true value. In the next section, we separate the nonidentifiability due to rotations that is common to $j<i$ and $j=i$, and then examine the additional degeneracy arising when $j<i$.

Let $w=[w_1,\ldots,w_d]^\top$ be the parameter vector obtained by arranging all entries of $A,b$ in a fixed order, where $d=Mi+M$. Let $w_*$ be the vector obtained by arranging the entries of the true parameters $[A_*,b_*]$ in the same order. We also use $p_i(x\mid w)$ to denote the same function as $p_i(x\mid A,b)$, and write the true distribution as $q_x=p_i(x\mid w_*)$. For every response pattern $x$, we have $q_x>0$. Define the Kullback--Leibler divergence from the true marginal distribution by $$K(w):=\sum_{x\in\{-1,1\}^M}q_x\log\frac{q_x}{p_i(x\mid w)}$$

Here, the statistical model $\{p_i(\cdot\mid w)\}$ is called regular if the following three conditions hold.
\begin{enumerate}
\item In a sufficiently small neighborhood of the true value $w_*$, the only point minimizing $K(w)$ is $w_*$.
\item $w_*$ is an interior point of the parameter space.
\item The Fisher information matrix at $w_*$, defined below, is nonsingular, that is, invertible.
\end{enumerate}
If the model is not regular, it is called singular.

Below, we define the score, the vector of partial derivatives of the log likelihood with respect to the individual parameter entries, by
\begin{equation}\label{eq801}
s_i(x;w):=
\frac{\nabla p_i(x\mid w)}{p_i(x\mid w)}
\end{equation}
and denote the score at the true value by
$s_i(x;w_*)$.
Taking the expectation under the true marginal distribution, we define the Fisher information matrix as
\begin{equation}\label{eq:irt_marg_fisher_def}
I_*:=\sum_{x\in\{-1,1\}^M}q_xs_i(x;w_*)s_i(x;w_*)^\top
=
\sum_{x\in\{-1,1\}^M}\frac{1}{p_i(x\mid w_*)}
\nabla p_i(x\mid w_*) \nabla p_i(x\mid w_*)^\top  
\in {\mathbb R}^{d\times d}
\end{equation}
The Fisher information matrix is said to be degenerate when its rank is less than $d=Mi+M$.


When $j\geq1$, consider the marginal model in \eqref{eq:irt_marg_response} with candidate dimension $j$ and loading matrix $B\in {\mathbb R}^{M\times j}$. Here, $B$ is the matrix at the true value in \eqref{eq:irt_marg_true_loading}. Set $v_*=[B,b_*]$, and denote the score and Fisher information matrix of this model by $s_j(x;v_*)$ and $J_*$, respectively. Specifically, $s_j(x;v_*)$ is the vector of partial derivatives of the log marginal probability with respect to the $Mj+M$ entries of $B,b$, evaluated at $v_*$, and
\begin{equation}\label{eq806}
J_*:=\sum_xq_xs_j(x;v_*)s_j(x;v_*)^\top
=\sum_{x\in\{-1,1\}^M}\frac{1}{p_j(x\mid v_*)}
\nabla p_j(x\mid v_*)\nabla p_j(x\mid v_*)^\top
\in\mathbb R^{(Mj+M)\times(Mj+M)}
\end{equation}
If the last $i-j$ columns are zero, integration over the corresponding ability components gives 1, so that
\begin{equation}\label{eq:irt_marg_submodel_identity}
p_i(x\mid[B,0],b)=p_j(x\mid B,b)
\end{equation}
and the true response probabilities of the two models are the same $q_x$. When $j=0$, define
\[
p_0(x\mid b):=\prod_{\ell=1}^M\sigma(-x_\ell b_\ell)
\]
and let $J_*$ at $v_*=b_*$ be the Fisher information matrix with respect to $b$. In this case, entries corresponding to $B$ are interpreted as absent. Moreover, since ability has a standard normal distribution, $Q\theta$ and $\theta$ have the same distribution for any orthogonal matrix $Q$ of size $i$. Therefore,
\begin{equation}\label{eq:irt_marg_rotation_invariance}
p_i(x\mid AQ,b)=p_i(x\mid A,b)
\end{equation}
By choosing an orthonormal basis of the null space of $A_*$ as the last $i-j$ coordinate directions, we may assume without loss of generality that
\begin{equation}\label{eq:irt_marg_true_loading}
A_*=[B,0],\qquad B\in\mathbb R^{M\times j},\quad \operatorname{rank}B=j
\end{equation}

\begin{hodai}\label{hodai1}
Let $0\leq j\leq i\leq M$, $i\geq1$, and express the true loading matrix as in \eqref{eq:irt_marg_true_loading}. If the parameters are reordered as the first $j$ columns, the difficulties, and the last $i-j$ columns, then
$$
I_*=\begin{bmatrix}J_*&0\\0&0_{M(i-j)}\end{bmatrix},\qquad \operatorname{rank}I_*=\operatorname{rank}J_*
$$
Here, $0_{M(i-j)}$ is the $M(i-j)\times M(i-j)$ zero matrix, and this block is absent when $j=i$.
\end{hodai}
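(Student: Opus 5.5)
The plan is to show that at the true value every score component belonging to the last $i-j$ columns of $A$ vanishes identically in $x$. The other components should coincide with the score of the $j$-dimensional model. The block structure of $I_*$ then follows directly from \eqref{eq:irt_marg_fisher_def}. Because $I_*=\sum_x q_x s_i s_i^\top$, it suffices to compute $\nabla p_i(x\mid w_*)$ entry by entry and compare it with $\nabla p_j(x\mid v_*)$.

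\emph{First block.} Consider derivatives with respect to the entries of the first $j$ columns and with respect to $b$. These entries move only within the family $\{[B',0],b'\}$. On that family, \eqref{eq:irt_marg_submodel_identity} gives $p_i(x\mid[B',0],b')=p_j(x\mid B',b')$ identically. Differentiating at $(B,b_*)$ yields exactly $\nabla p_j(x\mid v_*)$. The true probabilities $q_x$ coincide as well, so this block of $I_*$ is $J_*$.

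\emph{Last block.} Let $c=a_{\ell k}$ with $k>j$, and differentiate under the integral sign. This is justified because $\sigma$ and its derivative are bounded and $\varphi_i$ has all moments. The derivative of the integrand at the true value is
\[
\frac{\partial}{\partial a_{\ell k}}\prod_{m}\sigma\{x_m(a_m^\top\theta-b_m)\}
= x_\ell\,\theta_k\,\sigma'\{x_\ell(a_\ell^\top\theta-b_\ell)\}\prod_{m\ne\ell}\sigma\{x_m(a_m^\top\theta-b_m)\}.
\]
At $A_*=[B,0]$ everything except the factor $\theta_k$ depends only on $\theta_{1:j}$, because every $a_m^\top\theta=B_m^\top\theta_{1:j}$. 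The density factorizes as $\varphi_i(\theta)=\varphi_j(\theta_{1:j})\varphi_{i-j}(\theta_{j+1:i})$. Hence the integral splits, and it contains the factor $\int\theta_k\,\varphi_1(\theta_k)\,d\theta_k=0$. So $\partial p_i(x\mid w_*)/\partial a_{\ell k}=0$ for all $x$ and all $k>j$. Every row and column of $I_*$ indexed by these $M(i-j)$ parameters is therefore zero, which gives both the off-diagonal zero blocks and the block $0_{M(i-j)}$.

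After the stated reordering, these two facts give the block form. The rank identity follows because a block-diagonal matrix with a zero block has rank equal to that of the other block. The case $j=0$ is handled the same way: $B$ is absent, $p_i(x\mid 0,b)=p_0(x\mid b)$ gives the $b$-block, and all $A$-derivatives vanish by the same mean-zero argument. The case $j=i$ is trivial, since there is no zero block and $I_*=J_*$ directly. The only mildly delicate step is justifying differentiation under the integral; the dominated convergence bound $|\theta_k|\varphi_i(\theta)$ suffices.
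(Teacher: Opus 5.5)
Your proposal is correct and follows essentially the same route as the paper's Appendix A proof: the first block comes from differentiating the submodel identity $p_i(x\mid[B,0],b)=p_j(x\mid B,b)$, and the scores for the last $i-j$ columns vanish because the integrand factors through an independent mean-zero standard normal coordinate. The only difference is cosmetic: the paper differentiates along $A(t)=[B,tC]$ for arbitrary $C$ and uses $E_\xi[\xi]=0$, whereas you do the same computation entry by entry for each $a_{\ell k}$ with $k>j$, which is equivalent.
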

Proof: See Appendix A.

\subsection{The Results of Chen and Colleagues and the Remaining Questions}
In the notation of the preceding subsection, the specific comparison of ability dimensions in binary item response models considered by \citet{ChenMoustakiZhang2020} is the case $i=2,j=1$. Example 2(a) of their paper considers a test of whether responses can be explained by one ability direction or whether a second direction is needed. They write the true loading matrix as $A_*=[B,0]$, $B\in\mathbb R^{M\times1}$, $\operatorname{rank}B=1$, and show that the Fisher information matrix is degenerate because the scores with respect to the second column are zero. They also use tangent-cone analysis to give the limiting distribution of the likelihood ratio statistic in this nonregular setting. However, establishing this degeneracy alone does not identify its causes and extent. Three questions remain.

First, the degeneracy due to rotations has not been quantified. As stated in the preceding subsection, rotating the ability coordinates and changing the loading matrix accordingly leaves the marginal response distribution unchanged. Chen and colleagues impose the constraint $a_{12}=0$ in the two-factor model. When $a_{*11}\ne0$, this aligns the discrimination vector of the first item with the first coordinate direction and removes the continuous rotational degree of freedom in this case. For general $i,j$, however, the dimension of the kernel produced by rotations and the resulting loss of rank of the Fisher information matrix have not been determined. Removing rotations by a particular constraint and determining their effect on degeneracy are different tasks.

Second, degeneracy due to an over-specified latent dimension has been examined explicitly only for $i-j=1$. Lemma 1(2) and its proof in their paper use the fact that the second ability direction does not contribute to responses at the true value and that the corresponding standard normal distribution has mean zero to obtain
\[
\left.\frac{\partial p_2(x\mid A,b)}{\partial a_{\ell2}}\right|_{(A_*,b_*)}=0,\qquad \ell=2,\ldots,M
\]
Thus, even after rotations are restricted, $M-1$ independent directions remain in the kernel. However, the increase in the kernel dimension when there are several extra ability directions has not been determined. Nor has the overlap between these directions and those due to rotations been classified. Consequently, this result does not determine the loss of rank for general $i,j$.

Third, and more fundamentally, degeneracy other than that due to rotations and an over-specified latent dimension has not been examined. Showing that the scores with respect to the second column vanish does not establish that the scores for the first-column discrimination parameters and the difficulties are linearly independent. If these scores are linearly dependent, there are directions in which joint changes to the parameters of the existing ability directions and to the difficulties produce no first-order change in the marginal response probabilities. In the notation of the preceding subsection, identifying the zero block corresponding to the extra ability directions does not determine whether the remaining matrix $J_*$ is degenerate. An explanation based only on the two causes overlooks this issue.

Indeed, let $M=3$, $i=j=1$, $A_*=[a,0,0]^\top$, $a\ne0$, and $b_*=0$. In this case, there is neither a continuous rotational degree of freedom nor a difference between the candidate dimension and the rank of the true loading matrix. Nevertheless, the symmetry of $Z\sim N(0,1)$ and the identity $\sigma(z)+\sigma(-z)=1$ imply that, for every $x\in\{-1,1\}^3$,
\[
p_1(x\mid[a,0,0]^\top,0)=\frac14{\mathbb E}_Z[\sigma(x_1aZ)]=\frac18
\]
Changing $a$ therefore leaves the marginal distribution unchanged, so its score is zero and the Fisher information matrix is degenerate. Thus, it is incorrect to conclude that degeneracy disappears once rotations are removed and the candidate dimension is made equal to the rank of the true loading matrix. Under what conditions does degeneracy remain after rotations and the over-specified latent dimension are removed? This question is not answered by the analysis of Chen and colleagues.

\section{Orthogonal decomposition of the kernel of the Fisher information matrix}

Throughout this section, assume $1\leq j\leq i\leq M$ and set $r:=i-j$.
Let $w$ denote the parameter vector obtained by arranging the components of $(A,b)$ in a fixed order, and write $p_i(x\mid w)$ for the marginal response probability of the response pattern $x\in\{-1,1\}^M$. Also, let $w_*=(A_*,b_*)$.

First, replace $Q$ in (\ref{eq:irt_marg_rotation_invariance})
by a function $Q(t)$
that takes values in the orthogonal matrices of size $i$, is differentiable in a neighborhood of $t=0$, and satisfies $Q(0)=I_i$.
Set
$\Gamma:=Q'(0)$. Differentiating $Q(t)^\top Q(t)=I_i$ gives $\Gamma^\top=-\Gamma$. Partitioning $\Gamma$ according to $j$ and $r$, we can write
\[
\Gamma=\begin{bmatrix}\Omega&T\\-T^\top&\Omega_0\end{bmatrix},\qquad
\Omega^\top=-\Omega,\quad T\in\mathbb R^{j\times r},\quad \Omega_0^\top=-\Omega_0
\]
Moreover,
let $w(t)$ be the parameter vector corresponding to $[A_*Q(t),b_*]$, and set $$u:=\left.\frac{dw(t)}{dt}\right|_{t=0}$$
This is precisely the vector representation of
$$\left.\frac{d}{dt}[A_*Q(t),b_*]\right|_{t=0}=[A_*\Gamma,0]=[B\Omega,BT,0]
$$
Here, (\ref{eq:irt_marg_rotation_invariance})
means that its left-hand side is constant, regardless of the orthogonal matrix $Q$.
Therefore,
differentiating (\ref{eq:irt_marg_rotation_invariance}) with respect to $t$ and setting $t=0$ gives
\[
0
=\left.\frac{dp_i(x\mid w(t))}{dt}\right|_{t=0}=\nabla p_i(x\mid w_*)^{\mathsf T}u
\qquad x\in\{-1,1\}^M
\]
Multiplying (\ref{eq:irt_marg_fisher_def}) by $u$ on the right then gives
$I_*u=0$. Thus, the vector $[B\Omega,BT,0]$ generated by this rotation belongs to the kernel of $I_*$. Differentiating a rotation involving only two coordinates gives a skew-symmetric matrix whose two corresponding off-diagonal entries are $1,-1$ and whose other entries are zero. Every skew-symmetric matrix is a linear combination of these matrices. Since the kernel is a linear space, the conclusion above holds for every $\Omega,T$.
\[
\mathcal R_{i,j}:=\{[B\Omega,BT,0]\in\mathbb R^{Mi+M}:\Omega\in\mathbb R^{j\times j},\ \Omega^\top=-\Omega,\ T\in\mathbb R^{j\times r}\}
\]
With this definition, $\mathcal R_{i,j}\subseteq\ker I_*$.

Also, for $C\in {\mathbb R}^{M\times r}$, marginalization under the standard normal distribution gives
$$p_i(x\mid [B,tC],b_*)=p_i(x\mid [B,-tC],b_*)$$
Differentiating both sides with respect to $t$ at zero gives quantities of equal magnitude and opposite sign. Hence,
\[
\left.\frac{d}{dt}p_i(x\mid [B,tC],b_*)\right|_{t=0}=0
\qquad
\]
By the same argument, every vector of the form $[0,C,0]$ belongs to the kernel of $I_*$.


Each column of $BT$ belongs to the column space of $B$. We therefore define the directions in its orthogonal complement within the last $r$ columns by
\[
\mathcal N_{i,j}:=\{[0,C,0]\in\mathbb R^{Mi+M}:C\in\mathbb R^{M\times r},\ B^\top C=0\}
\]
Every $C\in\mathbb R^{M\times r}$ has the unique decomposition
\begin{equation}\label{eq14}
C=BT_C+C_{\perp},\qquad
T_C=(B^\top B)^{-1}B^\top C,\qquad
C_\perp=C-BT_C
\end{equation}
Indeed, $B^\top C_\perp=0$, and the decomposition is unique because $B^\top B$ is nonsingular.

The $j$-dimensional model itself may also have kernel directions other than rotations. Define
\[
\mathcal E_{i,j}:=
\{[D,0,c]\in\mathbb R^{Mi+M}:[D,c]\in\ker J_*,\
B^\top D=D^\top B\}
\]
The condition
$B^\top D=D^\top B$
is equivalent to requiring that,
for every skew-symmetric matrix $\Omega$,
$D$ and $B\Omega$ are orthogonal when viewed as column vectors of length $Mj$,
that is, ${\rm tr}(D^\top B\Omega)=0$.
Indeed, for $S=D^\top B$,
$${\rm tr}(S\Omega)=
\sum_{1\leq r\leq j}\sum_{1\leq s\leq j}S_{rs}\Omega_{sr}
=
\sum_{1\leq r\leq j-1}\sum_{r+1\leq s\leq j}(S_{sr}-S_{rs})\Omega_{rs}=0
$$
Since $\Omega_{rs}$ (${1\leq r\leq j-1}$, ${r+1\leq s\leq j}$) can be chosen arbitrarily, $S$ must be symmetric (sufficiency is immediate).

We now obtain the following theorem.

\begin{teiri}[Orthogonal decomposition of the kernel]\label{prop:irt_rotation_separation}
Assume $1\leq j\leq i\leq M$. The linear spaces $\mathcal R_{i,j}, \mathcal N_{i,j}, \mathcal E_{i,j}
$ satisfy
\begin{equation}\label{eq:irt_full_kernel_separation}
\ker I_*=\mathcal R_{i,j}\oplus\mathcal N_{i,j}\oplus\mathcal E_{i,j}
\end{equation}
Here, $\oplus$ means that the subspaces are mutually orthogonal and the decomposition is unique. Moreover,
\begin{equation}\label{eq:irt_rotation_direct_sum_dimensions}
\dim\mathcal R_{i,j}=\frac{j(j-1)}2+j(i-j),\qquad \dim\mathcal N_{i,j}=(M-j)(i-j)
\end{equation}
Thus, setting $\kappa_*:=\dim\mathcal E_{i,j}$ gives
\begin{equation}\label{eq:irt_exact_rank_with_residual}
\operatorname{rank}I_*=Mj+M-\frac{j(j-1)}2-\kappa_*
\end{equation}
The quantity $\kappa_*$ is the dimension of the kernel remaining in the $j$-dimensional model after rotation directions have been removed, and does not depend on the candidate dimension $i$.
\end{teiri}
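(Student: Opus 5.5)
The argument rests on Lemma~\ref{hodai1}. Order the parameters as $[D,C,c]$, where $D\in\mathbb R^{M\times j}$ is the block for the first $j$ columns, $C\in\mathbb R^{M\times r}$ the block for the last $r$ columns, and $c$ the block for the difficulties. The lemma gives
\[
\ker I_*=\{[D,C,c]:[D,c]\in\ker J_*,\ C\in\mathbb R^{M\times r}\}.
\]
The last $r$ columns therefore contribute a free factor $\mathbb R^{M\times r}$, and the task reduces to describing $\ker J_*$.

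\emph{Step 1: split $\ker J_*$.} By the rotation argument of this section applied to the $j$-dimensional model, $\mathcal R^{(j)}:=\{[B\Omega,0]:\Omega^\top=-\Omega\}\subseteq\ker J_*$. Decompose any $[D,c]\in\ker J_*$ orthogonally, in the trace inner product, as
\[
D=B\Omega_D+D',
\]
where $B\Omega_D$ is the orthogonal projection of $D$ onto the linear space $\{B\Omega\}$. By the trace computation preceding the theorem, the residual satisfies $B^\top D'=D'^\top B$. Since $[B\Omega_D,0]\in\ker J_*$ and the kernel is a linear space, $[D',c]\in\ker J_*$ as well. Hence
\[
\ker J_*=\mathcal R^{(j)}\oplus\{[D,c]\in\ker J_*:B^\top D\ \text{symmetric}\}.
\]
This sum is orthogonal, because $\operatorname{tr}(D^\top B\Omega)=0$ whenever $D^\top B$ is symmetric, and it is direct for the same reason.

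\emph{Step 2: split the $C$ block.} Use \eqref{eq14}: $C=BT_C+C_\perp$. Here $[0,BT_C,0]$ is orthogonal to $[0,C_\perp,0]$, since $\operatorname{tr}(T_C^\top B^\top C_\perp)=0$.

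\emph{Step 3: assemble the decomposition.} Write a general kernel vector as
\[
[D,C,c]=[B\Omega_D,BT_C,0]+[0,C_\perp,0]+[D',0,c].
\]
The three summands lie in $\mathcal R_{i,j}$, $\mathcal N_{i,j}$ and $\mathcal E_{i,j}$ respectively. The reverse inclusions hold because each of these three spaces was already shown to lie in $\ker I_*$.

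Orthogonality is checked pairwise:
\begin{itemize}
\item $\mathcal N_{i,j}$ versus $\mathcal E_{i,j}$: their supports are disjoint blocks.
\item $\mathcal R_{i,j}$ versus $\mathcal N_{i,j}$: only the $C$ block is shared, and there $\operatorname{tr}(T^\top B^\top C)=0$ because $B^\top C=0$.
\item $\mathcal R_{i,j}$ versus $\mathcal E_{i,j}$: only the $D$ block is shared, and there $\operatorname{tr}(\Omega^\top B^\top D)=0$ by the symmetry condition.
\end{itemize}
Mutual orthogonality gives uniqueness of the decomposition.

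\emph{Step 4: dimensions.} The map $(\Omega,T)\mapsto[B\Omega,BT,0]$ is injective because $B$ has full column rank, so
\[
\dim\mathcal R_{i,j}=\frac{j(j-1)}2+jr.
\]
For $\mathcal N_{i,j}$, each of the $r$ columns of $C$ ranges over $\ker B^\top$, which has dimension $M-j$, so $\dim\mathcal N_{i,j}=(M-j)r$.

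\emph{Step 5: rank.} Summing the three dimensions,
\[
\dim\ker I_*=\frac{j(j-1)}2+jr+(M-j)r+\kappa_*=\frac{j(j-1)}2+Mr+\kappa_*.
\]
Subtracting this from $Mi+M$ and using $Mi-Mr=Mj$ yields \eqref{eq:irt_exact_rank_with_residual}.

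\emph{Independence from $i$.} $\mathcal E_{i,j}$ is isomorphic, by dropping the zero $C$ block, to $\{[D,c]\in\ker J_*:B^\top D=D^\top B\}$. This set is defined purely in terms of $J_*$ and $B$, so $\kappa_*$ does not depend on $i$.

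\emph{Main obstacle.} The only delicate point is Step 1. The split requires that the projection of $D$ onto the rotation directions, paired with a zero difficulty block, be itself a kernel vector; this is what allows the residual $[D',c]$ to remain in $\ker J_*$. That requirement is exactly the rotation invariance of the $j$-dimensional model, so it is supplied by the argument given before the theorem.
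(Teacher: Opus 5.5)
Your proposal is correct and follows essentially the same route as the paper. It reduces via Lemma~\ref{hodai1} to $\ker J_*$ with a free $C$ block, splits $\ker J_*$ orthogonally into $\{[B\Omega,0]\}$ and the part with $B^\top D$ symmetric, splits $C$ via \eqref{eq14}, and counts dimensions using injectivity of $(\Omega,T)\mapsto[B\Omega,BT]$ and $\dim\ker B^\top=M-j$. Your remark that $\mathcal E_{i,j}$ is isomorphic to a subspace defined only through $J_*$ and $B$ usefully makes explicit the $i$-independence that the paper asserts without argument.
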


\begin{proof}
By Lemma~\ref{hodai1}, $[D,C,c]\in\ker I_*$ is equivalent to $[D,c]\in\ker J_*$, and $C$ is unrestricted. In the inclusion $\mathcal R_{i,j}\subseteq\ker I_*$ established above using $\Gamma=Q'(0)$, setting $T=0$ shows that $[B\Omega,0,0]\in\ker I_*$ for every skew-symmetric matrix $\Omega$. Hence, Lemma~\ref{hodai1} gives
\[
[B\Omega,0]\in\ker J_*
\]
Decomposing the finite-dimensional inner product space $\ker J_*$ into this space of rotation directions and its orthogonal complement, every $[D,c]\in\ker J_*$ can be written uniquely as
\[
[D,c]=[B\Omega,0]+[D_\perp,c],\qquad
\Omega^\top=-\Omega,\quad
[D_\perp,c]\in\ker J_*,\quad
B^\top D_\perp=D_\perp^\top B
\]
Using Eq.~\eqref{eq14} for $C$, we obtain
\[
[D,C,c]=[B\Omega,BT_C,0]+[0,C_\perp,0]+[D_\perp,0,c]
\]
The three terms on the right belong to $\mathcal R_{i,j}$, $\mathcal N_{i,j}$, and $\mathcal E_{i,j}$, respectively. Conversely, every element of $\mathcal R_{i,j}$ belongs to the kernel of $I_*$ by Eq.~\eqref{eq:irt_marg_rotation_invariance}, and every element of $\mathcal N_{i,j}$ or $\mathcal E_{i,j}$ also belongs to the kernel of $I_*$ by its definition and Lemma~\ref{hodai1}.

We have already shown that the vector representations of $BT_C,C_\perp$ are orthogonal,
as are those of $D_\perp,B\Omega$.
Since $\mathcal N_{i,j}$ and $\mathcal E_{i,j}$ affect different components, they are orthogonal. This proves Eq.~\eqref{eq:irt_full_kernel_separation}.

Since $B$ has full column rank, the map $[\Omega,T]\mapsto[B\Omega,BT]$ is injective. A skew-symmetric matrix $\Omega$ has $j(j-1)/2$ free entries, and $T$ has $jr$ free entries. Therefore,
\[
\dim\mathcal R_{i,j}=\frac{j(j-1)}2+jr
\]
The column space of $B$ is a $j$-dimensional subspace of $\mathbb R^M$, so its orthogonal complement has dimension $M-j$. The condition $B^\top C=0$ means that each column of $C$ belongs to this orthogonal complement. Since the $r$ columns can be chosen independently,
\[
\dim\mathcal N_{i,j}=(M-j)r
\]
Adding these dimensions gives
\[
\dim\ker I_*=\frac{j(j-1)}2+jr+(M-j)r+\kappa_*
=\frac{j(j-1)}2+M(i-j)+\kappa_*
\]
Subtracting this from $Mi+M$ yields Eq.~\eqref{eq:irt_exact_rank_with_residual}.
\end{proof}

When $j=0$, $J_*$ is the Fisher information matrix of a model with independent Bernoulli responses and is nonsingular. Hence, Lemma~\ref{hodai1} gives $\operatorname{rank}I_*=M$.
Moreover, by Lemma~\ref{hodai1} and the nonsingularity of $J_*$, the kernel consists only of directions associated with the additional ability coordinates. We therefore define $\kappa_*=0$ in this case. Eq.~\eqref{eq:irt_exact_rank_with_residual} also holds in this case.

\section{Calculation of $\kappa_*$}

In the preceding section, we decomposed the kernel of the Fisher information matrix for the marginal 2PL model into rotation directions, directions corresponding to excess ability dimensions, and the remaining directions. In this section, we obtain the dimension $\kappa_*$ of $\mathcal E_{i,j}$, defined in the preceding section, from the rank of a matrix constructed from the true item parameters.
Let $1\leq j\leq i\leq M$, and let $B\in\mathbb R^{M\times j}$ and $b_*\in\mathbb R^M$ be the true loading matrix and difficulty vector, respectively. Assume that $\operatorname{rank}B=j$.

\begin{rei}\label{rei1}
Let $M=3$, $i=j=1$, $B=[1,0,0]^\top$, and $b_*=[0,0,0]^\top$. Then $\kappa_*=1$. Indeed,
the score is obtained from
the marginal response probabilities before substituting the true parameter values,
\[
p_x(a,b)={\mathbb E}_Z\left[\prod_{\ell=1}^3\sigma\{x_\ell(a_\ell Z-b_\ell)\}\right],\qquad Z\sim N(0,1)
\]
by differentiating with respect to each parameter, evaluating the derivatives at $a=[1,0,0]^\top$ and $b=[0,0,0]^\top$, and dividing by $p_x=1/8$ at the true parameter values.
For $x_1=\pm1$, let $h:={\mathbb E}_Z[\sigma'(Z)]$. Then
\begin{equation}\label{eq802}
{\mathbb E}_Z[\sigma(x_1Z)]=\frac12,\qquad
{\mathbb E}_Z[Z\sigma(x_1Z)]=x_1h,\qquad
{\mathbb E}_Z[Z\sigma'(x_1Z)]=0,\qquad {\mathbb E}_Z[\sigma'(x_1Z)]=h    
\end{equation}
holds. Indeed,
$Z$ and $-Z$ have the same distribution, and $\sigma(z)+\sigma(-z)=1$, so ${\mathbb E}_Z[\sigma(x_1Z)]=1/2$. Moreover,
integration by parts using $\varphi_1'(z)=-z\varphi_1(z)$ gives
\[
{\mathbb E}_Z[Z\sigma(Z)]
=
\int_{\mathbb R}z\sigma(z)\varphi_1(z)dz
=-\int_{\mathbb R}\sigma(z)\varphi_1'(z)\,dz
=\int_{\mathbb R}\sigma'(z)\varphi_1(z)\,dz
=h
\]
as claimed.
Together with ${\mathbb E}_Z[Z]=0$, this gives ${\mathbb E}_Z[Z\sigma(-Z)]={\mathbb E}_Z[Z]-{\mathbb E}_Z[Z\sigma(Z)]=-h$, and hence ${\mathbb E}_Z[Z\sigma(x_1Z)]=x_1h$.
Also, since $\sigma'(\cdot)$ is an even function,
$${\mathbb E}_Z[Z\sigma'(x_1Z)]=0,\qquad
{\mathbb E}_Z[\sigma'(-Z)]={\mathbb E}_Z[\sigma'(Z)]=h
$$
holds.
$$\sigma(0)=\frac{1}{2},\qquad \sigma'(0)=\frac{1}{4}$$
and $p_x=1/8$ at the true parameter values. Thus, by (\ref{eq802}), the score components with respect to
$a=[a_1,a_2,a_3]$, $b=[b_1,b_2,b_3]$
are, respectively,
\[
\begin{aligned}
&8x_1\left(\frac12\right)^2{\mathbb E}_Z[Z\sigma'(x_1Z)]=0,\quad
8x_2\frac14\frac12{\mathbb E}_Z[Z\sigma(x_1Z)]={hx_1x_2},
\quad 8x_3\frac14\frac12{\mathbb E}_Z[Z\sigma(x_1Z)]={hx_1x_3}\\
&\quad -8x_1\left(\frac12\right)^2
{\mathbb E}_Z[\sigma'(x_1Z)]=-2{hx_1},
\quad
-8x_2\frac14\frac12{\mathbb E}_Z[\sigma(x_1Z)]=-\frac{x_2}{2},
\quad -8x_3\frac14\frac12{\mathbb E}_Z[\sigma(x_1Z)]=-\frac{x_3}{2}
\end{aligned}
\]
as calculated above.
When the product of any two distinct components is averaged over the eight response patterns, the signs cancel and the result is zero. By (\ref{eq806}), the Fisher information matrix is
\[
J_*=\frac{1}{8}\sum_{x\in\{-1,1\}^3}s_1(x;v_*)s_1(x;v_*)^\top
=\operatorname{diag}(0,h^2,h^2,4h^2,\frac{1}{4},\frac{1}{4}).
\]
Thus, its kernel has dimension one. Since there are no rotation directions when $j=1$, we obtain $\kappa_*=1$.
\qed
\end{rei}

By Theorem~\ref{prop:irt_rotation_separation} in the preceding section, it suffices to consider directions $[D,0,c]\in\mathcal E_{i,j}$ after excluding rotation directions and directions corresponding to excess ability dimensions. Write the $\ell$th row of $B$ as $a_\ell^\top\in\mathbb R^j$ and the $\ell$th row of $D$ as $d_\ell^\top$, and set
\[
H:=DB^\top+BD^\top,\qquad u_\ell:=H_{\ell\ell}=2a_\ell^\top d_\ell
\]
The $\ell$th row of $A_*$ is $[a_\ell^\top,0]$. We now derive conditions on $u$ for the existence of a direction satisfying both $[D,c]\in\ker J_*$ and $B^\top D=D^\top B$.

\begin{hodai}\label{hodai0}
For the paths $B(s)=B+sD$ and $b(s)=b_*+sc$ in the loading and difficulty parameters,
$$[D,c]\in\ker J_* \quad\Longleftrightarrow\quad \dot p_x := 
\left.\frac{d}{ds}p_j(x\mid B(s),b(s))\right|_{s=0}=0,\quad
x\in \{-1,1\}^M$$
\end{hodai}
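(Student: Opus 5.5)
The plan is to use the standard fact that a positive semidefinite Gram-type matrix annihilates a vector exactly when every generating vector is orthogonal to it. Write $v=[D,c]$, viewed as a vector in $\mathbb R^{Mj+M}$ in the same fixed order as $v_*$. By the chain rule, the path $v(s)=v_*+sv$ gives
\[
\dot p_x=\nabla p_j(x\mid v_*)^\top v,
\]
where differentiability of $p_j$ in the parameters follows by differentiating under the integral in \eqref{eq:irt_marg_response}. This is justified because $\sigma$ and $\sigma'$ are bounded and the $\theta$-derivatives of the integrand grow at most linearly in $\theta$, which is integrable against $\varphi_j$. So the right-hand condition of Lemma~\ref{hodai0} is $\nabla p_j(x\mid v_*)^\top v=0$ for all $x$.

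For the direction ($\Leftarrow$), I substitute into \eqref{eq806}:
\[
J_*v=\sum_x \frac{1}{q_x}\nabla p_j(x\mid v_*)\,\dot p_x=0 .
\]
For the direction ($\Rightarrow$), I take the quadratic form. If $J_*v=0$, then
\[
0=v^\top J_*v=\sum_{x\in\{-1,1\}^M}\frac{\dot p_x^{\,2}}{q_x}.
\]
Each $q_x$ is strictly positive and finite: $q_x>0$ was noted in Section~2, and $q_x\le1$. Hence every term of this sum of nonnegative terms vanishes, which gives $\dot p_x=0$ for all $x$.

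The case $j=0$ is handled identically, with $v=c$ and $p_0$ in place of $p_j$. The only step needing any care is the differentiation under the integral, so that $\dot p_x$ really equals the gradient pairing. That follows from dominated convergence with the bound $|\partial_{B}\prod_\ell\sigma(\cdot)|\le C(1+\|\theta\|)$ uniformly for $s$ near $0$. There is no real obstacle beyond this.
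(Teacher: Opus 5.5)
Your proposal is correct and follows essentially the paper's route: the chain rule gives $\dot p_x=\nabla p_j(x\mid v_*)^\top v$, and then the quadratic form $v^\top J_*v=\sum_x \dot p_x^{2}/q_x$ settles the equivalence. The only difference is that you obtain ($\Leftarrow$) by substituting $\dot p_x=0$ directly into $J_*v$. The paper instead invokes the positive-semidefinite equivalence $J_*v=0\Leftrightarrow v^\top J_*v=0$ without comment, so your version is marginally more self-contained.
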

\begin{proof}
Let $v(s)\in {\mathbb R}^{Mj+M}$ be the vector formed by listing the entries of $B(s),b(s)$, and write $v(0)=v_*$. Then
$$\dot p_x =\left.\frac{d}{ds}p_j(x\mid v(s))\right|_{s=0}=
\nabla p_j(x\mid v_*)^\top\left.\frac{dv(s)}{ds}\right|_{s=0}
=
\nabla p_j(x\mid v_*)^\top \dot v
$$
where $\displaystyle \dot v:=\left.\frac{dv(s)}{ds}\right|_{s=0}$ is the vector formed by listing the entries of $[D,c]$.
Therefore,
$$\dot v^\top J_* \dot v=
\dot v^\top\left(\sum_{x\in \{-1,1\}^M}\frac{1}{ p_j(x\mid v_*)}
\nabla p_j(x\mid v_*)\nabla p_j(x\mid v_*)^\top\right)\dot v
=
\sum_{x\in \{-1,1\}^M}\frac{(\nabla p_j(x\mid v_*)^\top \dot v)^2}{ p_j(x\mid v_*)}=
\sum_{x\in \{-1,1\}^M}\frac{\dot p_x^2}{ p_j(x\mid v_*)}
$$
Moreover, since $J_*\dot v=0 \Longleftrightarrow \dot v^\top J_* \dot v=0$,
the lemma follows.
\end{proof}

Write $z_\ell$ for the inner product of each item's discrimination vector and the ability vector, and define the conditional response probability by
\[
F_x(z):=\prod_{\ell=1}^M\sigma\{x_\ell(z_\ell-b_{\ell*})\},\qquad z=[z_1,\ldots,z_M]^\top
\]
For $\Theta\sim N(0,I_j)$, we have $p_j(x\mid B,b_*)={\mathbb E}_\Theta[F_x(B\Theta)]$. In what follows, $\partial_\ell$ denotes partial differentiation with respect to $z_\ell$, and all expectations are evaluated at the true parameter values.

Let $p_\ell$ denote the probability of a correct response to item $\ell$, and let $p_{\ell r}$ denote the probability of correct responses to both items $\ell,r$. Set
\[
\sigma_\ell:=\sigma(a_\ell^\top\Theta-b_{\ell*}),\qquad
p_\ell={\mathbb E}_\Theta[\sigma_\ell],\qquad p_{\ell r}={\mathbb E}_\Theta[\sigma_\ell\sigma_r]
\]
Here, $\sigma_\ell',\sigma_\ell''$ denote the first and second derivatives of $\sigma$ with respect to its argument, evaluated at $a_\ell^\top\Theta-b_{\ell*}$.

Define
\begin{equation}\label{eq:kg-single-coefficients}
\alpha_\ell:={\mathbb E}_\Theta[\sigma_\ell']>0,\qquad
\eta_\ell:=\frac{{\mathbb E}_\Theta[\sigma_\ell'']}{2\alpha_\ell}
\end{equation}
\begin{equation}\label{eq:kg-pair-coefficients}
\beta_{\ell r}:={\mathbb E}_\Theta[\sigma_\ell'\sigma_r']>0,\qquad
\rho_{\ell r}:=-\frac{\frac12{\mathbb E}_\Theta[\sigma_\ell''\sigma_r]-\eta_\ell{\mathbb E}_\Theta[\sigma_\ell'\sigma_r]}{\beta_{\ell r}}\quad(\ell\ne r)
\end{equation}
The positivity of each denominator follows from $\sigma'(s)>0$. Also,
for $u=[u_1,\ldots,u_M]^\top$, define the symmetric matrix $H(u)$ by
$$
H_{\ell\ell}(u):=u_\ell,\qquad H_{\ell r}(u):=\rho_{\ell r}u_\ell+\rho_{r\ell}u_r\quad(\ell\ne r)
$$
and, for $x\in \{-1,1\}^M, \ell=1,\ldots,M$, define
\begin{align}
h_{x,\ell}:={}&\frac12{\mathbb E}_\Theta[\partial_\ell^2F_x(B\Theta)]-\eta_\ell{\mathbb E}_\Theta[\partial_\ell F_x(B\Theta)]+\sum_{r\ne\ell}\rho_{\ell r}{\mathbb E}_\Theta[\partial_\ell\partial_rF_x(B\Theta)]\label{eq:kg-h-coefficients}
\end{align}
Then the following proposition holds.

\begin{prop}\label{prop11}
Let $u=(2a_1^\top d_1,\ldots,2a_M^\top d_M)^\top$. If $c_\ell=\eta_\ell u_\ell$ and $H=H(u)$, then
$$
\dot p_x=\sum_{\ell=1}^M h_{x,\ell}u_\ell$$
holds. Moreover, if $[D,c]\in \ker J_*$, then
$c_\ell=\eta_\ell u_\ell$,\quad $\ell=1,\ldots,M$,\quad $H=H(u)$ holds.
\end{prop}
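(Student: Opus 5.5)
The plan is to compute $\dot p_x$ explicitly with Gaussian integration by parts (Stein's identity) and then read off the constraints on $[D,c]$ from the one-item and two-item marginals.

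\textbf{Computing $\dot p_x$.} Write $p_j(x\mid B(s),b(s))={\mathbb E}_\Theta[F_x^{(s)}((B+sD)\Theta)]$, where the difficulties are shifted by $sc$. Differentiating at $s=0$ under the expectation is legitimate because $\sigma$ and all its derivatives are bounded and $\Theta$ has finite moments. This gives
\[
\dot p_x=\sum_{\ell=1}^M{\mathbb E}_\Theta\bigl[\partial_\ell F_x(B\Theta)\,(d_\ell^\top\Theta-c_\ell)\bigr].
\]
Next apply Stein's identity ${\mathbb E}[\Theta g(\Theta)]={\mathbb E}[\nabla g(\Theta)]$ for $\Theta\sim N(0,I_j)$, the multivariate form of the integration by parts already used in Example~\ref{rei1}. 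Since $\nabla_\theta\,\partial_\ell F_x(B\theta)=\sum_r a_r\,\partial_r\partial_\ell F_x(B\theta)$, we get
\[
{\mathbb E}_\Theta[\partial_\ell F_x(B\Theta)\, d_\ell^\top\Theta]=\sum_{r}(a_r^\top d_\ell)\,{\mathbb E}_\Theta[\partial_\ell\partial_r F_x(B\Theta)].
\]
Because ${\mathbb E}[\partial_\ell\partial_rF_x]$ is symmetric in $(\ell,r)$, only the symmetric part of $(a_r^\top d_\ell)_{\ell r}$ survives. That symmetric part is $\tfrac12 H$ with $H=DB^\top+BD^\top$, so
\[
\dot p_x=\tfrac12\sum_{\ell,r}H_{\ell r}\,{\mathbb E}_\Theta[\partial_\ell\partial_rF_x]-\sum_\ell c_\ell\,{\mathbb E}_\Theta[\partial_\ell F_x].
\]
Now substitute $H_{\ell\ell}=u_\ell$, $H_{\ell r}=\rho_{\ell r}u_\ell+\rho_{r\ell}u_r$ for $\ell\neq r$, and $c_\ell=\eta_\ell u_\ell$. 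Regroup the off-diagonal double sum: by symmetry, $\tfrac12\sum_{\ell\ne r}(\rho_{\ell r}u_\ell+\rho_{r\ell}u_r){\mathbb E}[\partial_\ell\partial_rF_x]=\sum_\ell u_\ell\sum_{r\neq\ell}\rho_{\ell r}{\mathbb E}[\partial_\ell\partial_rF_x]$. Collecting the coefficient of each $u_\ell$ reproduces exactly $h_{x,\ell}$ in \eqref{eq:kg-h-coefficients}, which proves the first claim.

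\textbf{The converse.} Assume $[D,c]\in\ker J_*$. By Lemma~\ref{hodai0}, $\dot p_x=0$ for every $x$. Hence every linear combination vanishes, in particular the derivatives along the path of the marginal quantities $p_\ell={\mathbb E}[\sigma_\ell]$ and $p_{\ell r}={\mathbb E}[\sigma_\ell\sigma_r]$, which are sums of $p_x$ over patterns.

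For the one-item marginal, differentiating and applying Stein once gives
\[
0={\mathbb E}[\sigma_\ell'(d_\ell^\top\Theta-c_\ell)]=(a_\ell^\top d_\ell)\,{\mathbb E}[\sigma_\ell'']-c_\ell\alpha_\ell .
\]
Since $a_\ell^\top d_\ell=u_\ell/2$ and $\alpha_\ell>0$, this yields $c_\ell=\eta_\ell u_\ell$.

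For the pair marginal with $\ell\neq r$,
\[
0={\mathbb E}[\sigma_\ell'\sigma_r(d_\ell^\top\Theta-c_\ell)]+{\mathbb E}[\sigma_\ell\sigma_r'(d_r^\top\Theta-c_r)].
\]
Stein gives ${\mathbb E}[\sigma_\ell'\sigma_r\,d_\ell^\top\Theta]=(a_\ell^\top d_\ell){\mathbb E}[\sigma_\ell''\sigma_r]+(a_r^\top d_\ell)\beta_{\ell r}$, and symmetrically for the second term. Substituting these and $c_\ell=\eta_\ell u_\ell$, the cross terms combine to $\beta_{\ell r}(a_r^\top d_\ell+a_\ell^\top d_r)=\beta_{\ell r}H_{\ell r}$. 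The remaining terms are
\[
u_\ell\bigl(\tfrac12{\mathbb E}[\sigma_\ell''\sigma_r]-\eta_\ell{\mathbb E}[\sigma_\ell'\sigma_r]\bigr)+(\ell\leftrightarrow r).
\]
Dividing by $\beta_{\ell r}>0$ gives $H_{\ell r}=\rho_{\ell r}u_\ell+\rho_{r\ell}u_r$. The diagonal entries $H_{\ell\ell}=2a_\ell^\top d_\ell=u_\ell$ hold by definition, so $H=H(u)$.

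\textbf{Main obstacle.} The only delicate point is bookkeeping. One must apply Stein's identity to the correct factor, and check that the off-diagonal symmetrization matches the asymmetric coefficients $\rho_{\ell r}$ in \eqref{eq:kg-h-coefficients}. I would verify this by writing the double sum over ordered pairs explicitly. The analytic justification (differentiation under the integral and the integration by parts) is routine because $\sigma$ has bounded derivatives.
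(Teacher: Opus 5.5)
Your proposal is correct and follows essentially the same route as the paper. You use Gaussian integration by parts to write $\dot p_x$ in terms of $H=DB^\top+BD^\top$ and substitute $c_\ell=\eta_\ell u_\ell$ and $H=H(u)$. For the converse you differentiate the one-item and two-item marginals $p_\ell(s)$ and $p_{\ell r}(s)$, which forces $c_\ell=\eta_\ell u_\ell$ and $H_{\ell r}=\rho_{\ell r}u_\ell+\rho_{r\ell}u_r$.
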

\begin{proof} 
The probabilities $p_\ell, p_{\ell r}$ are obtained from $p_x$ by restricting to the response patterns satisfying $x_\ell=1$ and $x_\ell=x_r=1$, respectively, and summing the corresponding $p_x$. Thus, if we define
\[
\begin{aligned}
p_\ell(s)
&:={\mathbb E}_\Theta\left[
\sigma\{(a_\ell+sd_\ell)^\top\Theta-b_{\ell*}-sc_\ell\}
\right]\\
p_{\ell r}(s)
&:={\mathbb E}_\Theta\bigl[
\sigma\{(a_\ell+sd_\ell)^\top\Theta-b_{\ell*}-sc_\ell\}\cdot\sigma\{(a_r+sd_r)^\top\Theta-b_{r*}-sc_r\}
\bigr].
\end{aligned}
\]
then
\begin{equation}\label{eq711}
\dot p_x=0,\quad x\in\{-1,1\}^M\quad \Longrightarrow
\quad
\frac{dp_\ell(s)}{ds}|_{s=0}=\frac{dp_{\ell r}(s)}{ds}|_{s=0}=0,\quad
\ell=1,\ldots,M,\quad r\not=\ell
\end{equation}
holds. First,
\begin{align}
\frac{d}{ds}p_\ell(s)|_{s=0}
=&{\mathbb E}_\Theta[(d_\ell^\top\Theta-c_\ell)\sigma_\ell']
=\sum_{k=1}^j d_{\ell k}
{\mathbb E}_\Theta[\Theta_k\sigma_\ell']
-c_\ell{\mathbb E}_\Theta[\sigma_\ell']\nonumber\\
=&\sum_{k=1}^j d_{\ell k}a_{\ell k}
{\mathbb E}_\Theta[\sigma_\ell'']
-c_\ell{\mathbb E}_\Theta[\sigma_\ell']
=d_\ell^\top a_\ell
{\mathbb E}_\Theta[\sigma_\ell'']
-c_\ell{\mathbb E}_\Theta[\sigma_\ell']\label{eq991}
\end{align}
The third equality follows by the same argument as in Example~\ref{rei1} (integration by parts for the normal distribution).
Here, $d_{\ell k}$ denotes the $k$th component of $d_\ell$.
Therefore, for each $\ell=1,\ldots,M$,
\begin{equation}\label{eq712}
\frac{d}{ds}p_\ell(s)|_{s=0}=0 \quad \Longleftrightarrow\quad
c_\ell=\eta_\ell u_\ell
\end{equation}
holds.

Next, differentiating
$$p_j(x\mid B(s),b(s))={\mathbb E}_\Theta[F_x(B\Theta+s(D\Theta-c))]$$
with respect to $s$ and setting $s=0$ gives,
with $H=DB^\top+BD^\top=(H_{\ell r})$,
\begin{align}
\dot p_x={}&\sum_{\ell=1}^M{\mathbb E}_\Theta[(d_\ell^\top\Theta-c_\ell)\partial_\ell F_x(B\Theta)]\nonumber \\
={}&\sum_{\ell=1}^M\left\{\frac{H_{\ell\ell}}2{\mathbb E}_\Theta[\partial_\ell^2F_x(B\Theta)]-c_\ell{\mathbb E}_\Theta[\partial_\ell F_x(B\Theta)]\right\}+\sum_{\ell<r}H_{\ell r}{\mathbb E}_\Theta[\partial_\ell\partial_rF_x(B\Theta)]\label{eq:kg-probability-derivative}
\end{align}
Indeed, integration by parts for the normal distribution, ${\mathbb E}_\Theta[\Theta_k g(\Theta)]={\mathbb E}_\Theta[(\partial g/\partial\theta_k)(\Theta)]$, yields
\[
{\mathbb E}_\Theta[\Theta\partial_\ell F_x(B\Theta)]
=\sum_{r=1}^M a_r{\mathbb E}_\Theta[\partial_\ell\partial_rF_x(B\Theta)]
\]
For the terms with $r=\ell$, use $d_\ell^\top a_\ell=H_{\ell\ell}/2$. For the terms with $\ell\ne r$, combine the two terms with the indices interchanged and use $d_\ell^\top a_r+d_r^\top a_\ell=H_{\ell r}$. This gives \eqref{eq:kg-probability-derivative}. Moreover, $\sigma,\sigma',\sigma''$ are bounded, and ${\mathbb E}_\Theta\|\Theta\|<\infty$, so the interchange of differentiation and expectation and the integration by parts are justified.

Furthermore, since $c_\ell=\eta_\ell u_\ell$, $c_r=\eta_r u_r$,
\begin{align}
&\frac{d}{ds}p_{\ell r}(s)|_{s=0}
={}{\mathbb E}_\Theta[
(d_\ell^\top\Theta-c_\ell)\sigma_\ell'\sigma_r]+{\mathbb E}_\Theta[
(d_r^\top\Theta-c_r)\sigma_\ell\sigma_r']\nonumber\\
={}&d_\ell^\top a_\ell
{\mathbb E}_\Theta[\sigma_\ell''\sigma_r]
+d_r^\top a_r
{\mathbb E}_\Theta[\sigma_\ell\sigma_r'']
+(d_\ell^\top a_r+d_r^\top a_\ell)
{\mathbb E}_\Theta[\sigma_\ell'\sigma_r']
-c_\ell{\mathbb E}_\Theta[\sigma_\ell'\sigma_r]
-c_r{\mathbb E}_\Theta[\sigma_\ell\sigma_r']\nonumber\\
=&
\frac{1}{2}u_\ell{\mathbb E}[\sigma_{\ell}''\sigma_r]
+
\frac{1}{2}u_r{\mathbb E}[\sigma_{\ell}\sigma_r'']
+H_{\ell r}{\mathbb E}_\Theta[\sigma_\ell'\sigma_r']
-\eta_\ell u_\ell{\mathbb E}[\sigma_\ell'\sigma_r]
-\eta_ru_r{\mathbb E}[\sigma_\ell\sigma_r']
\label{eq992}\\
=&
\beta_{\ell r}\{H_{\ell r}-\rho_{\ell r}u_\ell-\rho_{r\ell}u_r\}\nonumber
\end{align}
Thus, by Lemma~\ref{hodai0} and \eqref{eq711}\eqref{eq712}\eqref{eq992},
$$
[D,c]\in \ker J_*
\quad \Longrightarrow \quad
\dot p_x=0,\quad x\in\{-1,1\}^M
\quad \Longrightarrow
\quad
c_\ell=\eta_\ell u_\ell,\quad \ell=1,\ldots,M, \quad H=H(u)$$
holds.
Finally, assuming $c_\ell=\eta_\ell u_\ell$ and $H=H(u)$ and substituting them into \eqref{eq:kg-probability-derivative}
gives the proposition.
\end{proof}

\begin{hodai}\label{lem:kg-gram-image}
Let $N=[n_1,\ldots,n_{M-j}]\in\mathbb R^{M\times(M-j)}$ be any basis matrix for $\ker B^\top$. For a symmetric matrix $H\in\mathbb R^{M\times M}$,
\[
H=DB^\top+BD^\top\ \text{for some }D\in\mathbb R^{M\times j}
\quad\Longleftrightarrow\quad N^\top HN=0
\]
holds. Moreover, the matrices $D$ satisfying $DB^\top+BD^\top=0$ are precisely those of the form $D=B\Omega$, $\Omega^\top=-\Omega$.
\end{hodai}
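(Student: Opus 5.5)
The plan is to work in an adapted basis of $\mathbb R^M$. Since $\operatorname{rank}B=j$, the matrix $P:=[B,N]\in\mathbb R^{M\times M}$ is nonsingular, because the column space of $B$ and $\ker B^\top$ are orthogonal complements. We then change variables $H\mapsto \tilde H:=P^\top HP$, which is a bijection on symmetric matrices, and write $\tilde H$ in $2\times 2$ block form
\[
\tilde H=\begin{bmatrix}B^\top HB & B^\top HN\\ N^\top HB & N^\top HN\end{bmatrix}.
\]

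\emph{Necessity.} If $H=DB^\top+BD^\top$, then $N^\top H N=(N^\top D)(B^\top N)+(N^\top B)(D^\top N)=0$, since $B^\top N=0$.

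\emph{Sufficiency.} Given symmetric $H$ with $N^\top HN=0$, we look for $D$ of the form $D=BX+NY$ with $X\in\mathbb R^{j\times j}$ and $Y\in\mathbb R^{(M-j)\times j}$. This is no restriction, since $P$ is invertible. Writing $G:=B^\top B$, which is nonsingular, we have $B^\top D=GX$ and $N^\top D=N^\top N\,Y=:KY$, where $K:=N^\top N$ is also nonsingular. Hence
\[
P^\top(DB^\top+BD^\top)P=\begin{bmatrix}GXG+GX^\top G & GY^\top K\\ KYG & 0\end{bmatrix}.
\]
We match this block by block with $\tilde H$:
\begin{itemize}
\item the lower-right block vanishes by hypothesis;
\item for the off-diagonal block, choose $Y:=K^{-1}N^\top HB\,G^{-1}$;
\item for the upper-left block, choose $X:=\tfrac12 G^{-1}(B^\top HB)G^{-1}$, which is symmetric, so that $GXG+GX^\top G=B^\top HB$.
\end{itemize}
The upper-right block then agrees automatically by symmetry of $H$. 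Invertibility of $P$ gives $H=DB^\top+BD^\top$.

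\emph{Kernel characterisation.} Suppose $DB^\top+BD^\top=0$, and write $D=BX+NY$ as above. The off-diagonal block gives $KYG=0$, so $Y=0$. The upper-left block gives $G(X+X^\top)G=0$, so $X^\top=-X$. Thus $D=B\Omega$ with $\Omega:=X$ skew-symmetric. Conversely, $B\Omega B^\top+B\Omega^\top B^\top=0$ for skew-symmetric $\Omega$.

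The argument is routine linear algebra. The only point needing care is that every $D$ admits the decomposition $D=BX+NY$ and that both $G$ and $K$ are invertible, and these follow from the full column rank of $B$ and of $N$.
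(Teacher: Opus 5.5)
Your proof is correct and is essentially the paper's argument written in the block coordinates of $P=[B,N]$. Since $N(N^\top N)^{-1}N^\top=I_M-\Pi$ with $\Pi=BG^{-1}B^\top$, your $D=BX+NY$ is exactly the paper's $D=(I_M-\tfrac12\Pi)HBG^{-1}$, and your step $Y=0$ in the kernel part is the paper's $(I_M-\Pi)D=0$. The only difference is that your block matching shows where that formula comes from, whereas Appendix~B verifies it directly through $H\Pi+\Pi H-\Pi H\Pi=H$.
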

Proof: See Appendix~B.

Since every entry of $H(u)$ is a linear combination of $u_1,\ldots,u_M$,
\begin{align}
N^\top H(u)N&=0,\label{eq:kg-L-definition1}\\
\sum_{\ell=1}^M h_{x,\ell}u_\ell&=0,\qquad x\in\{-1,1\}^M\label{eq:kg-L-definition2}
\end{align}
form a system of linear equations in $u_1,\ldots,u_M$. Write (\ref{eq:kg-L-definition1}) for the upper triangular entries, and let $L_j=L_j(B,b_*)$ be the matrix with $M$ columns obtained by collecting the coefficients of these equations. Thus, the two conditions are equivalent to
\begin{equation}\label{eq:kg-L-equations}
L_ju=0
\end{equation}
When $j=M$, (\ref{eq:kg-L-definition1}) is absent.

We now show that
\begin{equation}\label{map1}
\Phi: \mathcal E_{i,j}\ni [D,0,c] \mapsto u=(2a_1^\top d_1,\ldots,2a_M^\top d_M)^\top\in \ker L_j
\end{equation}
is a bijective linear map.

\begin{teiri}[Formula for $\kappa$]\label{thm:kg-kappa-formula}
For any $1\leq j\leq M$, any $B\in\mathbb R^{M\times j}$ satisfying $\operatorname{rank}B=j$, and any $b_*\in\mathbb R^M$, the matrix $L_j(B,b_*)$ defined above satisfies
\begin{equation}\label{eq:kg-kappa-formula}
\boxed{\kappa_*=M-\operatorname{rank}L_j(B,b_*)}
\end{equation}
Consequently,
\begin{equation}\label{eq:kg-fisher-rank}
\operatorname{rank}J_*=Mj-\frac{j(j-1)}2+\operatorname{rank}L_j(B,b_*).
\end{equation}
In particular, the kernel in the $j$-dimensional model contains no directions other than rotation directions if and only if $\operatorname{rank}L_j=M$. These conclusions do not depend on the choice of basis for $\ker B^\top$.
\end{teiri}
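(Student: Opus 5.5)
The plan is to prove that the map $\Phi$ in \eqref{map1} is a well-defined linear bijection from $\mathcal E_{i,j}$ onto $\ker L_j$. Then $\kappa_*=\dim\mathcal E_{i,j}=\dim\ker L_j=M-\operatorname{rank}L_j$. The rank formula \eqref{eq:kg-fisher-rank} follows by combining this with Theorem~\ref{prop:irt_rotation_separation} and Lemma~\ref{hodai1}: $\operatorname{rank}J_*=\operatorname{rank}I_*=Mj+M-j(j-1)/2-\kappa_*$. The "in particular" statement is the case $\kappa_*=0$. Basis independence holds because replacing $N$ by $NG$ with $G$ invertible gives $G^\top N^\top H(u)NG=0$ if and only if $N^\top H(u)N=0$. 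So $\ker L_j$, and hence its rank, is unchanged, even though the row set of $L_j$ changes.

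\emph{Well-definedness.} Take $[D,0,c]\in\mathcal E_{i,j}$, so $[D,c]\in\ker J_*$ and $B^\top D=D^\top B$. Set $u=\Phi([D,0,c])$. By Proposition~\ref{prop11}, $c_\ell=\eta_\ell u_\ell$ and $DB^\top+BD^\top=H(u)$. Lemma~\ref{lem:kg-gram-image} then gives $N^\top H(u)N=0$, which is \eqref{eq:kg-L-definition1}. By Lemma~\ref{hodai0}, $\dot p_x=0$ for all $x$. The first part of Proposition~\ref{prop11} gives $\dot p_x=\sum_\ell h_{x,\ell}u_\ell$, which yields \eqref{eq:kg-L-definition2}. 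Hence $L_ju=0$. Linearity of $\Phi$ is clear.

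\emph{Injectivity.} Suppose $\Phi([D,0,c])=0$. Then $c=\eta\circ u=0$ and $DB^\top+BD^\top=H(0)=0$. By the second part of Lemma~\ref{lem:kg-gram-image}, $D=B\Omega$ with $\Omega$ skew-symmetric. The symmetry condition then reads $B^\top B\Omega=\Omega^\top B^\top B=-\Omega B^\top B$. Equivalently, $[B\Omega,0]$ lies in the rotation subspace of $\ker J_*$ and is also orthogonal to it, since $\operatorname{tr}(D^\top B\Omega')=0$ for all skew $\Omega'$. Taking $\Omega'=\Omega$ gives $\operatorname{tr}(\Omega^\top B^\top B\Omega)=\|B\Omega\|_F^2=0$, so $D=0$.

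\emph{Surjectivity, which I expect to be the main step.} Take $u\in\ker L_j$. Since $N^\top H(u)N=0$, Lemma~\ref{lem:kg-gram-image} provides $D_0$ with $D_0B^\top+BD_0^\top=H(u)$. Comparing diagonals gives $2a_\ell^\top d_{0,\ell}=u_\ell$. Set $c_\ell=\eta_\ell u_\ell$. The first part of Proposition~\ref{prop11} gives $\dot p_x=\sum_\ell h_{x,\ell}u_\ell=0$, so $[D_0,c]\in\ker J_*$ by Lemma~\ref{hodai0}. Now project out the rotation component as in the proof of Theorem~\ref{prop:irt_rotation_separation}: write $[D_0,c]=[B\Omega,0]+[D_\perp,c]$ with $B^\top D_\perp$ symmetric and $[D_\perp,c]\in\ker J_*$. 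Since $B\Omega B^\top+B\Omega^\top B^\top=0$, $D_\perp$ produces the same $H(u)$, so it has the same diagonal and $\Phi([D_\perp,0,c])=u$. The delicate point is to check that the rotation part $[B\Omega,0]$ lies in $\ker J_*$, which was established earlier from \eqref{eq:irt_marg_rotation_invariance}. This ensures the projection stays inside $\ker J_*$, and that $H$ is exactly invariant under adding $B\Omega$.
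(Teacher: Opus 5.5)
Your proposal is correct and follows the paper's proof essentially step for step. Both set up $\Phi$ as a bijection from $\mathcal E_{i,j}$ onto $\ker L_j$; well-definedness uses Proposition~\ref{prop11} with Lemma~\ref{lem:kg-gram-image}, surjectivity builds $D$ from Lemma~\ref{lem:kg-gram-image} and projects out the rotation component, and injectivity reduces to $D=B\Omega\in\mathcal E_{i,j}\cap\mathcal R_{i,j}=\{0\}$. Your $\|B\Omega\|_F^2=0$ computation simply spells out the orthogonality the paper invokes there, and the rank formula and basis independence are handled exactly as in the paper.
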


\begin{proof}
First, take $[D,0,c]\in\mathcal E_{i,j}$. By definition, $[D,c]\in\ker J_*$, so Lemma~\ref{hodai0} gives $\dot p_x=0$, $x\in \{-1,1\}^M$. Also, Proposition~\ref{prop11} gives
$$c_\ell=\eta_\ell u_\ell,\qquad DB^\top+BD^\top=H(u)$$
Furthermore, Lemma~\ref{lem:kg-gram-image} gives (\ref{eq:kg-L-definition1}), and Proposition~\ref{prop11} gives (\ref{eq:kg-L-definition2}). Thus, $u\in\ker L_j$, and $\Phi$ is well defined.

Next, we prove surjectivity. Take $u\in\ker L_j$. By (\ref{eq:kg-L-definition1}) and Lemma~\ref{lem:kg-gram-image}, there exists $D$ such that $DB^\top+BD^\top=H(u)$. Set $c_\ell=\eta_\ell u_\ell$. Then Proposition~\ref{prop11} and (\ref{eq:kg-L-definition2}) give $\dot p_x=0$,\quad $x\in \{-1,1\}^M$. Hence, Lemma~\ref{hodai0} implies $[D,c]\in\ker J_*$.
Using the decomposition $D=B\Omega+D_\perp$ from the preceding section to remove the rotation component gives
$$[D_\perp,c]\in\ker J_*,\qquad B^\top D_\perp=D_\perp^\top B$$
so $[D_\perp,0,c]\in\mathcal E_{i,j}$. Moreover, since $\Omega^\top=-\Omega$,
$$D_\perp B^\top+BD_\perp^\top
=H(u)-B(\Omega+\Omega^\top)B^\top=H(u)$$
holds.
Since the $(\ell,\ell)$ entry of $H(u)$ is $u_\ell$,
we have $\Phi([D_\perp,0,c])=u$. Thus, $\Phi$ is surjective.

Finally, we prove injectivity. If $\Phi([D,0,c])=0$, then $u=0$, so $c=0$ and $H=H(0)=0$. By Lemma~\ref{lem:kg-gram-image}, $D=B\Omega$. Therefore, $[D,0,0]$ belongs to both $\mathcal E_{i,j}$ and $\mathcal R_{i,j}$. By the orthogonal decomposition in the preceding section, their intersection consists only of zero. Hence, $D=0$, and $\Phi$ is also injective.

Thus, $\Phi$ is bijective. Consequently,
$$\kappa_*=\dim\mathcal E_{i,j}=\dim\ker L_j
=M-\operatorname{rank}L_j$$
is obtained. That is, $\kappa_*$ is the number of free parameters in the general solution of the system of linear equations in the $M$ unknowns $u_1,\ldots,u_M$. The rank formula \eqref{eq:kg-fisher-rank} follows from \eqref{eq:irt_exact_rank_with_residual} in the preceding section and Lemma~\ref{hodai1}.
Finally, replacing $N$ by another basis matrix $NR$ does not change the condition $N^\top H(u)N=0$, because $R$ is invertible. Thus, $\ker L_j$ and its dimension are also unchanged.
\end{proof}



\begin{kei}[Disjoint column supports]\label{cor:kg-disjoint}
Assume that $\operatorname{rank}B=j$, and let
\[
S_r:=\{\ell:B_{\ell r}\ne0\},\qquad n_r:=|S_r|\quad(r=1,\ldots,j)
\]
If $S_1,\ldots,S_j$ are pairwise disjoint, then, for any $b_*$,
\[
q:=\#\{r:n_r=1\text{ or }n_r=2\},\qquad
\boxed{\kappa_*=q},\qquad \operatorname{rank}L_j=M-q
\]
holds.
\end{kei}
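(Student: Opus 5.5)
The plan is to use Theorem~\ref{thm:kg-kappa-formula} together with the independence structure that disjoint supports create. We will show that $\ker L_j$, equivalently $\mathcal E_{i,j}$, splits into contributions from the individual ability directions. A direction with $n_r\in\{1,2\}$ should contribute exactly one dimension, and a direction with $n_r\geq3$ should contribute none.

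\textbf{Step 1: reduction to supports and factorization.} Let $Z_0$ be the set of items whose row $a_\ell$ vanishes; these lie in no $S_r$. For $\ell\in Z_0$ we have $u_\ell=2a_\ell^\top d_\ell=0$, so every $u\in\ker L_j$ is supported on $S_1\cup\cdots\cup S_j$. For $\ell\in S_r$ we have $a_\ell^\top\Theta=B_{\ell r}\Theta_r$, so the item blocks $S_1,\ldots,S_j,Z_0$ are conditionally driven by independent coordinates of $\Theta$. Hence $F_x(B\Theta)$ is a product of factors depending on disjoint coordinates, and every expectation in \eqref{eq:kg-single-coefficients}, \eqref{eq:kg-pair-coefficients} and \eqref{eq:kg-h-coefficients} factorizes across blocks.

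\textbf{Step 2: cross-block decoupling.} For $\ell\in S_r$ and $m\notin S_r$ we have $\sigma_\ell''$ independent of $\sigma_m$. This gives ${\mathbb E}[\sigma_\ell''\sigma_m]-2\eta_\ell{\mathbb E}[\sigma_\ell'\sigma_m]=p_m({\mathbb E}[\sigma_\ell'']-2\eta_\ell\alpha_\ell)=0$, so $\rho_{\ell m}=0$ and $H(u)_{\ell m}=0$ across blocks. The condition $N^\top H(u)N=0$ from Lemma~\ref{lem:kg-gram-image} should therefore reduce to blockwise conditions: $H(u)$ restricted to $S_r$ must be of the form $d a^\top+a d^\top$ for the single column $a=B_{S_r,r}$. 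Cross-block entries impose nothing, since $DB^\top+BD^\top$ can realize any cross-block entries through the $d_{\ell k}$ with $k\ne r$.

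Similarly, for $\ell\in S_r$ we have ${\mathbb E}[\partial_\ell^2F_x]-2\eta_\ell{\mathbb E}[\partial_\ell F_x]$ equal to (block-$r$ factor)$\times$(other factors), and $\rho_{\ell m}=0$ across blocks. So $\sum_\ell h_{x,\ell}u_\ell=\sum_r G_r(x)\,\ell_r(x_{S_r};u_{S_r})$, where $G_r$ is the product of the marginal probabilities of the other blocks. Summing over the responses outside $S_r$ shows that $L_ju=0$ is equivalent to $L_1^{(r)}u_{S_r}=0$ for each $r$, where $L_1^{(r)}$ is the matrix of Theorem~\ref{thm:kg-kappa-formula} for the unidimensional model on the items $S_r$. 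The converse holds because each block's contribution vanishes separately. Hence $\kappa_*=\sum_r\kappa^{(r)}$.

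\textbf{Step 3: one-dimensional blocks.} Everything now rests on the unidimensional 2PL model with $n$ items, nonzero loadings and arbitrary difficulties, for which we must show $\kappa=1$ if $n\in\{1,2\}$ and $\kappa=0$ if $n\geq3$.

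For $n=1$, the marginal distribution has one free probability and two parameters, so the kernel has dimension at least one. It is exactly one, because the $b$-score is nonzero ($\alpha_\ell>0$).

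For $n=2$, the marginal distribution has three free probabilities and four parameters, so $\kappa\geq1$. For the upper bound, Proposition~\ref{prop11} gives $c$ and $H_{12}$ in terms of $u$, and we will show that the single-item and pair equations leave one free $u$-direction, exhibiting it explicitly. Since there are no rotations when the block has one direction, this gives $\kappa=1$.

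For $n\geq3$ we must show $\ker L_1=0$, and this is the main obstacle. With $j=1$ we have $H_{\ell m}=\tfrac12(u_\ell a_m/a_\ell+u_m a_\ell/a_m)$, so $H=H(u)$ gives, for each pair, one linear equation $(\tfrac{a_m}{2a_\ell}-\rho_{\ell m})u_\ell+(\tfrac{a_\ell}{2a_m}-\rho_{m\ell})u_m=0$. I would first try to show that on a triple of items these pair equations, combined with the third-order equations $h_x$ (which involve $p_{\ell m k}$), force $u=0$. If sign or monotonicity arguments on $\rho$ fail for general $b$, I would instead argue via identifiability: the three bivariate margins of a triple determine $(a,b)$ locally, which yields local injectivity of the map from parameters to probabilities. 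I would then upgrade this to injectivity of the differential by an analytic/Jacobian computation, possibly reducing to the $b$-symmetric case and using analyticity in $b_*$ together with a nonvanishing determinant argument.
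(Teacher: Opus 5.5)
Your reduction in Steps 1--2 is sound. The pieces that work are: cross-block $\rho_{\ell m}=0$; realizing $H(u)$ with a $D$ supported on the blocks; and summing the $h_x$-equations over the other blocks. Your parameter-counting lower bounds $\kappa\ge1$ for $n_r\in\{1,2\}$ are also valid.

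\textbf{The gap is in Step 3.} You never prove the one fact that both the upper bound for $n_r=2$ and the vanishing for $n_r\ge3$ rest on. For $n_r=2$, exhibiting a free direction only re-proves $\kappa\ge1$; the bound $\kappa\le1$ requires the single pair equation to be nontrivial. For $n_r\ge3$ you leave the argument open.

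The missing ingredient is that the pair coefficients are sign-definite for every $b_*$. Write $\gamma_\ell$ for the nonzero loading of item $\ell$ (your scalar $a_\ell$), $Z\sim N(0,1)$ and $\sigma_\ell=\sigma(\gamma_\ell Z-b_{\ell*})$. Multiplying your pair equation by $2\gamma_\ell\gamma_m$ turns it into $w_{\ell m}u_\ell+w_{m\ell}u_m=0$, with $w_{\ell m}:=\gamma_m^2-2\gamma_\ell\gamma_m\rho_{\ell m}$. Integration by parts gives $\mathbb E_Z[(Z-2\gamma_\ell\eta_\ell)\sigma_\ell']=0$. Hence one may subtract a constant from $\sigma_m$ and obtain
\[
w_{\ell m}=\frac{1}{\beta_{\ell m}}\,\mathbb E_Z\bigl[\sigma_\ell'\,\gamma_m(Z-2\gamma_\ell\eta_\ell)\{\sigma(\gamma_mZ-b_{m*})-\sigma(2\gamma_\ell\gamma_m\eta_\ell-b_{m*})\}\bigr]>0,
\]
because $\sigma$ is strictly increasing, so the integrand is positive for $Z\ne2\gamma_\ell\eta_\ell$. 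This has two consequences:
\begin{itemize}
\item Each pair equation is nontrivial, which closes the case $n_r=2$.
\item Each pair equation forces $u_\ell$ and $u_m$ to have strictly opposite signs or both to vanish. Three items cannot be pairwise of opposite sign, so $u=0$ on any block with $n_r\ge3$.
\end{itemize}
The third-order equations you planned to bring in are not needed at all.

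Your fallback routes would not close this gap, for two reasons.
\begin{itemize}
\item Local identifiability of $(a,b)$ from bivariate margins does not give an injective differential. A map can be locally injective with a degenerate Jacobian; that is exactly the singular phenomenon this paper is about. So the ``upgrade'' is the entire difficulty, not a technicality.
\item A nonvanishing determinant at a symmetric $b_*$, combined with analyticity in $b_*$, only yields nondegeneracy off a proper analytic subset of difficulty vectors. The corollary asserts $\kappa_*=q$ for every $b_*$.
\end{itemize}
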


\begin{proof}
By assumption, the nonzero entries in row $\ell$ occur in at most one column $r$. We denote this relation by
$\ell\in S_r$ and write $\gamma_\ell:=B_{\ell r}\ne0$.

The proof consists of establishing
\begin{equation}\label{eq43}
w_{\ell m}:=\gamma_m^2-2\gamma_\ell\gamma_m\rho_{\ell m}>0, \quad \ell,m\in S_r,\quad \ell\ne m,\quad r=1,\ldots,j
\end{equation}
and, for each $[u_1,\ldots,u_M]$, showing that the conditions
\begin{equation}\label{eq:kg-disjoint-linear}
u_\ell=0\quad\left(\ell\notin\bigcup_rS_r\right),\qquad
w_{\ell m}u_\ell+w_{m\ell}u_m=0
\quad(\ell<m,\ \ell,m\in S_r).
\end{equation}
are equivalent to the first condition:
$$\eqref{eq:kg-L-definition1} \Longleftrightarrow \eqref{eq:kg-disjoint-linear}$$
and that \eqref{eq:kg-L-definition2} also holds under these conditions.

These assertions imply the following conclusions.
If \eqref{eq:kg-L-definition1}
holds, then, in \eqref{eq:kg-disjoint-linear},
the case $n_r=1$ has one variable and no constraint, whereas the case $n_r=2$ has two variables and one equation. Thus, the solution space is one-dimensional in both cases.
When $n_r\ge3$, for distinct $\ell,m,k\in S_r$, substituting
\[
u_m=-\frac{w_{\ell m}}{w_{m\ell}}u_\ell,\qquad
u_k=-\frac{w_{\ell k}}{w_{k\ell}}u_\ell
\]
into the equation for $m,k$ gives
\[
0=
w_{mk}u_m+w_{km}u_k=
-\left(
\frac{w_{mk}w_{\ell m}}{w_{m\ell}}
+\frac{w_{km}w_{\ell k}}{w_{k\ell}}
\right)u_\ell
\]
Since the expression in parentheses is positive, $u_\ell=0$, and $u_m,u_k$ and the remaining variables are also zero. Moreover, \eqref{eq:kg-L-definition1}$\Longrightarrow $\eqref{eq:kg-L-definition2}, so $L_ju=0$ is
equivalent to \eqref{eq:kg-L-definition1}.

First, we prove \eqref{eq43}.
For items in the same $S_r$ indexed by $\ell,m$, let
$\sigma_\ell=\sigma(\gamma_\ell Z-b_{\ell*})$ and $\sigma_m=\sigma(\gamma_m Z-b_{m*})$.
Integration by parts for the normal distribution, the product rule, and the definitions of $\eta_\ell$, $\alpha_\ell$, and $\beta_{\ell m}$ in \eqref{eq:kg-single-coefficients} and \eqref{eq:kg-pair-coefficients} give, for $Z\sim N(0,1)$,
\begin{align}\label{eq13}
{\mathbb E}_Z[(Z-2\gamma_\ell\eta_\ell)\sigma_\ell']
={\mathbb E}_Z[Z\sigma_\ell']
-2\gamma_\ell\eta_\ell{\mathbb E}_Z[\sigma_\ell']
=\gamma_\ell(2\alpha_\ell\eta_\ell)-2\gamma_\ell\eta_\ell\alpha_\ell=0.
\end{align}
\begin{align}\label{eq114}
{\mathbb E}_Z[Z\sigma_\ell'\sigma_m]
&=\gamma_\ell{\mathbb E}_Z[\sigma_\ell''\sigma_m]
+\gamma_m{\mathbb E}_Z[\sigma_\ell'\sigma_m']=\gamma_\ell{\mathbb E}_Z[\sigma_\ell''\sigma_m]
+\gamma_m\beta_{\ell m}
\end{align}
Here, we have used
\[
\frac{d}{dz}\sigma'(\gamma_\ell z-b_{\ell*})
=\gamma_\ell\sigma''(\gamma_\ell z-b_{\ell*}),\qquad
{\mathbb E}_Z[Z\sigma_\ell']
=\gamma_\ell{\mathbb E}_Z[\sigma_\ell'']
\]
Hence,
by
the definition of $\rho_{\ell m}$ in \eqref{eq:kg-pair-coefficients} and \eqref{eq114},
\begin{align}
w_{\ell m}
&=\gamma_m^2+\frac{2\gamma_\ell\gamma_m}{\beta_{\ell m}}
\left\{\frac12{\mathbb E}_Z[\sigma_\ell''\sigma_m]
-\eta_\ell{\mathbb E}_Z[\sigma_\ell'\sigma_m]\right\}
=\frac{\gamma_m}{\beta_{\ell m}}
\left\{\gamma_m\beta_{\ell m}
+\gamma_\ell{\mathbb E}_Z[\sigma_\ell''\sigma_m]
-2\gamma_\ell\eta_\ell{\mathbb E}_Z[\sigma_\ell'\sigma_m]\right\}\nonumber\\
&=\frac{\gamma_m}{\beta_{\ell m}}
\left\{{\mathbb E}_Z[Z\sigma_\ell'\sigma_m]
-2\gamma_\ell\eta_\ell{\mathbb E}_Z[\sigma_\ell'\sigma_m]\right\}=\frac{\gamma_m}{\beta_{\ell m}}
{\mathbb E}_Z[(Z-2\gamma_\ell\eta_\ell)\sigma_\ell'\sigma_m]\nonumber\\
&=
\frac1{\beta_{\ell m}}
{\mathbb E}_Z\bigl[
\sigma_\ell'\gamma_m(Z-2\gamma_\ell\eta_\ell)
\{\sigma(\gamma_mZ-b_{m*})-\sigma(2\gamma_\ell\gamma_m\eta_\ell-b_{m*})\}\bigr]>0
\end{align}
holds.
The last equality follows because
$\sigma(2\gamma_\ell\gamma_m\eta_\ell-b_{m*})$ is constant with respect to $Z$ and, by \eqref{eq13},
\[
\begin{aligned}
&{\mathbb E}_Z\bigl[
(Z-2\gamma_\ell\eta_\ell)\sigma_\ell'
\sigma(2\gamma_\ell\gamma_m\eta_\ell-b_{m*})\bigr]
 =
\sigma(2\gamma_\ell\gamma_m\eta_\ell-b_{m*})
{\mathbb E}_Z[(Z-2\gamma_\ell\eta_\ell)\sigma_\ell']=0.
\end{aligned}
\]
holds. The last inequality follows from the following inequalities.

\[
\beta_{\ell m}>0, \qquad \sigma_\ell'>0,\qquad
\gamma_m(z-2\gamma_\ell\eta_\ell)
\{\sigma(\gamma_mz-b_{m*})-\sigma(2\gamma_\ell\gamma_m\eta_\ell-b_{m*})\}>0
\qquad(z\ne2\gamma_\ell\eta_\ell)
\]

Next, we prove
$\eqref{eq:kg-L-definition1} \Longrightarrow \eqref{eq:kg-disjoint-linear}$.
Take $u$ satisfying \eqref{eq:kg-L-definition1}. By Lemma \ref{lem:kg-gram-image},
$DB^\top+BD^\top=H(u)$ holds for some $D$.
If $\ell\notin\bigcup_rS_r$, then $a_\ell=0$, so $u_\ell=0$.
For $\ell,m\in S_r$,
$u_\ell=2\gamma_\ell D_{\ell r}$ and
$u_m=2\gamma_m D_{m r}$
give
\[
H_{\ell m}(u)
=\gamma_mD_{\ell r}+\gamma_\ell D_{mr}
=\frac{\gamma_m}{2\gamma_\ell}u_\ell
+\frac{\gamma_\ell}{2\gamma_m}u_m.
\]
Together with $H_{\ell m}(u)=\rho_{\ell m}u_\ell+\rho_{m\ell}u_m$, this yields
$$(\frac{\gamma_m}{2\gamma_\ell}-\rho_{\ell m})u_\ell+(\frac{\gamma_\ell}{2\gamma_m}-\rho_{m\ell})u_m=0$$
Multiplying both sides by $2\gamma_\ell\gamma_m$ and using the definitions of $w_{\ell m},w_{m\ell}$
gives \eqref{eq:kg-disjoint-linear}.

We now prove
$\eqref{eq:kg-L-definition1} \Longleftarrow \eqref{eq:kg-disjoint-linear}$.
Take $u$ satisfying \eqref{eq:kg-disjoint-linear}, and let
\begin{equation}\label{eq999}
D_{\ell r}:=
\begin{cases}
u_\ell/(2\gamma_\ell),&\ell\in S_r,\\
0,&\ell\notin S_r,
\end{cases}
\qquad c_\ell:=\eta_\ell u_\ell
\end{equation}
By Lemma \ref{lem:kg-gram-image},
for this $D$, it suffices to show that all entries of $DB^\top+BD^\top$ and $H(u)$ agree.
First, if $\ell\in S_r$, the definition of $D$ gives
\[
(DB^\top+BD^\top)_{\ell\ell}
=2\gamma_\ell D_{\ell r}
=2\gamma_\ell\frac{u_\ell}{2\gamma_\ell}
=u_\ell=H_{\ell\ell}(u).
\]
If $\ell\notin\bigcup_rS_r$, then $B,D$ both have a zero row $\ell$, and $u_\ell=0$ by \eqref{eq:kg-disjoint-linear}. Thus, the diagonal entries also agree in this case.
Next, for items in the same $S_r$ with $\ell\ne m$, \eqref{eq:kg-disjoint-linear} gives
\[
w_{\ell m}u_\ell+w_{m\ell}u_m=0
\]
Substituting the definitions of $w_{\ell m},w_{m\ell}$ and dividing by $2\gamma_\ell\gamma_m$, we obtain
\[
\left(\frac{\gamma_m}{2\gamma_\ell}-\rho_{\ell m}\right)u_\ell
+\left(\frac{\gamma_\ell}{2\gamma_m}-\rho_{m\ell}\right)u_m=0.
\]
Therefore,
\[
\begin{aligned}
(DB^\top+BD^\top)_{\ell m}
=\gamma_mD_{\ell r}+\gamma_\ell D_{mr}
=\frac{\gamma_m}{2\gamma_\ell}u_\ell+\frac{\gamma_\ell}{2\gamma_m}u_m
=\rho_{\ell m}u_\ell+\rho_{m\ell}u_m
=H_{\ell m}(u).
\end{aligned}
\]
Furthermore, if $\ell\in S_r$, $m\in S_t$, and $r\ne t$, $B,D$ have zeros in row $\ell$ outside column $r$ and in row $m$ outside column $t$. Hence,
\[
(DB^\top+BD^\top)_{\ell m}
=\sum_{k=1}^j(D_{\ell k}B_{mk}+B_{\ell k}D_{mk})=0.
\]
This sum is also zero when one of the items has a zero row. Moreover, for items in distinct sets $S_r,S_t$, $\sigma_\ell$ and its derivatives are functions of $\Theta_r$ alone, and $\sigma_m$ is a function of $\Theta_t$ alone. The independence of $\Theta_r,\Theta_t$ therefore gives
\[
\frac12{\mathbb E}_\Theta[\sigma_\ell''\sigma_m]
-\eta_\ell{\mathbb E}_\Theta[\sigma_\ell'\sigma_m]
=\left(\frac12{\mathbb E}_\Theta[\sigma_\ell'']
-\eta_\ell{\mathbb E}_\Theta[\sigma_\ell']\right)
{\mathbb E}_\Theta[\sigma_m]=0.
\]
The last equality follows from the definition of $\eta_\ell$. The same first equality holds when one of the items has a zero row, since $\sigma$ and its derivatives for that item are constant. Thus, $\rho_{\ell m}=0$. Interchanging $\ell,m$ also gives $\rho_{m\ell}=0$, so, for these item pairs,
\[
H_{\ell m}(u)=\rho_{\ell m}u_\ell+\rho_{m\ell}u_m=0.
\]
Since all entries agree, this $D$ satisfies $DB^\top+BD^\top=H(u)$.
Thus, \eqref{eq:kg-L-definition1} has been proved.

Finally, we prove $\eqref{eq:kg-L-definition1}\Longrightarrow
\eqref{eq:kg-L-definition2}$.
In \eqref{eq991}, obtained in the proof of Proposition \ref{prop11}, replace $2a_\ell^\top d_\ell$
by $u_\ell$ and
substitute $c_\ell=\eta_\ell u_\ell$. By the definitions of $\alpha_\ell,\eta_\ell$ in \eqref{eq:kg-single-coefficients},
\[
\begin{aligned}
\left.\frac{d}{ds}p_\ell(s)\right|_{s=0}
=\frac{u_\ell}{2}{\mathbb E}_\Theta[\sigma_\ell'']
-c_\ell{\mathbb E}_\Theta[\sigma_\ell']
=\frac{u_\ell}{2}(2\alpha_\ell\eta_\ell)
-(\eta_\ell u_\ell)\alpha_\ell=0.
\end{aligned}
\]
For items in the same $S_r$ with $\ell\ne m$, replacing $2a_\ell^\top d_\ell$, $2a_m^\top d_m$ by $u_\ell,u_m$ in \eqref{eq992}, obtained in the proof of Proposition \ref{prop11}, gives
\[
\begin{aligned}
\left.\frac{d}{ds}p_{\ell m}(s)\right|_{s=0}
={}\frac{u_\ell}{2}{\mathbb E}_\Theta[\sigma_\ell''\sigma_m]
-c_\ell{\mathbb E}_\Theta[\sigma_\ell'\sigma_m]+\frac{u_m}{2}{\mathbb E}_\Theta[\sigma_\ell\sigma_m'']
-c_m{\mathbb E}_\Theta[\sigma_\ell\sigma_m']
+(DB^\top+BD^\top)_{\ell m}\beta_{\ell m}.
\end{aligned}
\]
Substituting $c_\ell=\eta_\ell u_\ell$ and $c_m=\eta_m u_m$, the definitions of $\rho_{\ell m},\rho_{m\ell}$ in \eqref{eq:kg-pair-coefficients} show that the first two terms equal $-\beta_{\ell m}\rho_{\ell m}u_\ell$, and the next two terms equal $-\beta_{\ell m}\rho_{m\ell}u_m$. Also, the definition of $D$ gives
\[
(DB^\top+BD^\top)_{\ell m}
=\frac{\gamma_m}{2\gamma_\ell}u_\ell
+\frac{\gamma_\ell}{2\gamma_m}u_m.
\]
Therefore,
\[
\begin{aligned}
\left.\frac{d}{ds}p_{\ell m}(s)\right|_{s=0}
=\beta_{\ell m}\left\{
\left(\frac{\gamma_m}{2\gamma_\ell}-\rho_{\ell m}\right)u_\ell
+\left(\frac{\gamma_\ell}{2\gamma_m}-\rho_{m\ell}\right)u_m
\right\}
=\frac{\beta_{\ell m}}{2\gamma_\ell\gamma_m}
(w_{\ell m}u_\ell+w_{m\ell}u_m)=0.
\end{aligned}
\]
The second equality follows from the definitions of $w_{\ell m},w_{m\ell}$. The last equality follows because the chosen $u$ satisfies \eqref{eq:kg-disjoint-linear}.
When $n_r=1$, the two response probabilities are $p_\ell(s),1-p_\ell(s)$; when $n_r=2$, the four response probabilities are
\[
p_{\ell m}(s),\quad p_\ell(s)-p_{\ell m}(s),\quad
p_m(s)-p_{\ell m}(s),\quad
1-p_\ell(s)-p_m(s)+p_{\ell m}(s)
\]
Thus, all their derivatives are zero. When $n_r\ge3$, for $\ell\in S_r$, each $u_\ell$ has already been shown to be zero. By \eqref{eq999} and $c_\ell=\eta_\ell u_\ell$,
$D,c$ are zero for all items in this set, and all their response probabilities are independent of $s$.
For $S_r$, write $p^{(r)}_{x_{S_r}}(s)$ for the within-group response probability. For $B(s)=B+sD$ and $b(s)=b_*+sc$, the definition of $D$ shows that each item $\ell\in S_r$ depends only on $\Theta_r$. Also, $u_\ell=0$ for a zero row, so $c_\ell=0$. By the independence of $\Theta_1,\ldots,\Theta_j$,
\[
p_x(s)=\prod_{r=1}^j p^{(r)}_{x_{S_r}}(s)
\prod_{\ell\notin\bigcup_rS_r}\sigma(-x_\ell b_{\ell*}).
\]
For $n_r=1,2$, the factors $p_{xS_r}^{(r)}$ are the only ones that can depend on $s$. Thus,
since the derivative of each factor at $s=0$ is zero, differentiating the product gives
\[
\left.\frac{d}{ds}p_x(s)\right|_{s=0}=0\,\qquad x\in\{-1,1\}^M
\]
Substituting the previously established identities $DB^\top+BD^\top=H(u)$ and $c_\ell=\eta_\ell u_\ell$ into \eqref{eq:kg-probability-derivative}, we obtain
\[
\sum_{\ell=1}^M h_{x,\ell}u_\ell=\left.\frac{d}{ds}p_x(s)\right|_{s=0}=0.
\]
Therefore, \eqref{eq:kg-L-definition2} also holds.

It follows that $\ker L_j$ is the solution space of \eqref{eq:kg-disjoint-linear}, and its dimension is $q$.
By Theorem \ref{thm:kg-kappa-formula}, we obtain the following conclusion.
\[
\kappa_*=\dim\ker L_j=q,\qquad \operatorname{rank}L_j=M-q
\]

\end{proof}

\begin{rei}[Groups of three and two items]\label{rei2}
Let $M=5$, $i=j=2$, $b_*=0$, and
\[
B=\begin{bmatrix}1&0\\1&0\\1&0\\0&1\\0&1\end{bmatrix}
\]
Since $S_1=\{1,2,3\}$ and $S_2=\{4,5\}$, we have $n_1=3$, $n_2=2$, and $q=1$.
By Corollary \ref{cor:kg-disjoint}, we obtain the following values.
\[
\boxed{\kappa_*=1},\qquad \operatorname{rank}L_2=5-1=4
\]

For Example \ref{rei1}, $S_1=\{1\}$ and $n_1=q=1$, so the conclusion $\kappa_*=1$ also follows immediately from Corollary \ref{cor:kg-disjoint}.
\qed
\end{rei}

\section{Conclusions and Future Work}
We analyzed the Fisher information matrix of the marginal response distribution obtained by integrating out standard normally distributed abilities in the multidimensional 2PL model, and identified the parameter directions that cause degeneracy and their dimensions. In particular, we distinguished degeneracy due to rotational invariance from that due to an over-specified latent dimension, and provided a method for determining from the item parameters whether degeneracy remains after these directions have been removed.

The first result is the orthogonal decomposition of the entire kernel for general $1\leq j\leq i\leq M$,
\[
\ker I_*=\mathcal R_{i,j}\oplus\mathcal N_{i,j}\oplus\mathcal E_{i,j}
\]
Here, $\mathcal R_{i,j}$ consists of rotation directions, $\mathcal N_{i,j}$ consists of directions associated with unused ability coordinates that cannot be represented by rotations, and $\mathcal E_{i,j}$ consists of directions within the $j$-dimensional model that are not due to rotations. The space of rotation directions has dimension $j(j-1)/2+j(i-j)$, while the space of directions associated with unused ability coordinates that cannot be represented by rotations has dimension $(M-j)(i-j)$. This allows the effects of rotations and an over-specified latent dimension to be counted without overlap. Writing $\kappa_*:=\dim\mathcal E_{i,j}$, we obtained
\[
\dim\ker I_*=\frac{j(j-1)}2+M(i-j)+\kappa_*,\qquad
\operatorname{rank}I_*=Mj+M-\frac{j(j-1)}2-\kappa_*
\]
Thus, the rank of the Fisher information matrix is determined by the number of items $M$, the rank $j$ of the true loading matrix, and the degeneracy that remains within the $j$-dimensional model. Increasing the candidate dimension increases the dimension of the kernel, but does not change the rank when the true item parameters $B,b_*$ are fixed.

The second result is the formula for the dimension of this remaining degeneracy,
\[
\kappa_*=M-\operatorname{rank}L_j(B,b_*)
\]
The matrix $L_j(B,b_*)$ has $M$ columns and is constructed from the true discrimination and difficulty parameters. The calculation of $\kappa_*$ is thereby reduced to finding the dimension of the solution space of a system of linear equations in $M$ unknowns. Consequently, the necessary and sufficient condition for no degeneracy to remain after rotation directions and directions associated with unused ability coordinates have been removed is $\operatorname{rank}L_j(B,b_*)=M$. This condition determines whether degeneracy remains after the candidate dimension has been set equal to the rank of the loading matrix and degeneracy due to rotations has been removed.

Furthermore, when the nonzero positions in the columns of the loading matrix are disjoint, we showed that $\kappa_*$ equals the number of columns with one or two nonzero entries. This result holds for arbitrary difficulty parameters. Under this structure, each ability direction associated with only one or two items gives rise to one additional direction of degeneracy. Therefore, in the $j$-dimensional model with this structure, at least three items must be associated with every ability direction for there to be no degeneracy other than that due to rotations, and this condition is also sufficient. Thus, even without an over-specified latent dimension, degeneracy can arise from the correspondence between ability directions and items, and its dimension can be determined.

A topic for future work is to examine the second- and higher-order changes in the marginal response probabilities along the kernel directions identified here. Among directions along which the first derivatives vanish, some produce changes in the distribution at higher orders, whereas others leave the distribution unchanged as the parameters vary, as in Example~\ref{rei1}. These cases need to be distinguished in analyzing the likelihood near singular points, deriving the limiting distributions of likelihood ratio statistics and the learning coefficients of the marginal model, and applying these results to latent dimension selection. The results of this paper identify all directions of first-order degeneracy that need to be examined in such analyses and give the dimension of the degeneracy associated with each cause.

\bibliography{ref}
\appendix

\section*{Appendix A: Proof of Lemma~\ref{hodai1}}

\begin{proof}
Set $r=i-j$ and partition the ability vector as $\theta=(\zeta^\top,\xi^\top)^\top$, where $\zeta\sim N(0,I_j)$ and $\xi\sim N(0,I_r)$ are independent. Differentiating Eq.~\eqref{eq:irt_marg_submodel_identity} with respect to $B,b$ shows that the scores for the first $j$ columns and the difficulties coincide with $s_j(x;v_*)$.

For an arbitrary $C\in\mathbb R^{M\times r}$, set
\[
A(t)=[B,tC]
\]
Also, define
\[
F_x(z):=\prod_{\ell=1}^M\sigma\{x_\ell(z_\ell-b_{\ell*})\}
\]
Then,
\[
p_i(x\mid A(t),b_*)=E_{\zeta,\xi}[F_x(B\zeta+tC\xi)].
\]
Since $\nabla F_x$ is bounded and ${\mathbb E}\|\xi\|<\infty$,
we may interchange differentiation and expectation to obtain
\[
\left.\frac{d}{dt}p_i(x\mid A(t),b_*)\right|_{t=0}
=E_{\zeta,\xi}[\nabla F_x(B\zeta)^\top C\xi]
=E_\zeta[\nabla F_x(B\zeta)]^\top C E_\xi[\xi]
=0
\]
Thus, all scores for the last $i-j$ columns are zero.

Therefore, if the parameters are ordered as the first $j$ columns, the difficulties, and the last $i-j$ columns, the score at the true parameter is
\[
s_i(x;w_*)=
\begin{bmatrix}
s_j(x;v_*)\\
0
\end{bmatrix}
\]
Hence,
\[
I_*=\sum_xq_xs_i(x;w_*)s_i(x;w_*)^\top
=
\begin{bmatrix}
\displaystyle\sum_xq_xs_j(x;v_*)s_j(x;v_*)^\top&0\\
0&0_{M(i-j)}
\end{bmatrix}
=
\begin{bmatrix}
J_*&0\\
0&0_{M(i-j)}
\end{bmatrix}.
\]
It follows that $\operatorname{rank}I_*=\operatorname{rank}J_*$.
\end{proof}

\section*{Appendix B: Proof of Lemma~\ref{lem:kg-gram-image}}

\begin{proof}
The implication $\Longrightarrow$ follows from $B^\top N=0$, since
$$N^\top(DB^\top+BD^\top)N=N^\top D\cdot B^\top N+(B^\top N)^\top\cdot D^\top N=0$$
To prove $\Longleftarrow$, set $G:=B^\top B$ and $\Pi:=BG^{-1}B^\top$. The matrix $G$ is nonsingular, and $\Pi$ is the orthogonal projection onto the column space of $B$. Since the columns of $N$ span its orthogonal complement, $N^\top HN=0$ is equivalent to
\begin{equation}\label{eq701}
(I_M-\Pi)H(I_M-\Pi)=0
\end{equation}
Now define
$$
D:=\left(I_M-\frac12\Pi\right)HBG^{-1}
$$
Then,
$$
\begin{aligned}
DB^\top+BD^\top&=
(I_M-\frac12\Pi)HBG^{-1}B^\top+B(G^{-1})^\top B^\top H^\top (I_M-\frac{1}{2}\Pi)
=
(I_M-\frac{1}{2}\Pi)H\Pi +\Pi H^\top (I_M-\frac{1}{2}\Pi)\\
&=
H\Pi+\Pi H-\Pi H\Pi=H
\end{aligned}
$$
The last equality follows from Eq.~\eqref{eq701}.

Next, suppose that
\begin{equation}\label{eq702}
DB^\top+BD^\top=0
\end{equation}
Multiplying on the left by $I_M-\Pi$ and on the right by $BG^{-1}$, and using
$(I_M-\Pi)B=B-B=0$, gives
$$0=(I_M-\Pi)(DB^\top +BD^\top)BG^{-1}
=(I_M-\Pi)DB^\top BG^{-1}+(I_M-\Pi) B\cdot D^\top BG^{-1}
=(I_M-\Pi)D
$$
Thus, we can write $D=B\Omega$. Substituting $D=B\Omega$ into Eq.~\eqref{eq702} and multiplying on the left by $G^{-1}B^\top$ and on the right by $BG^{-1}$ gives
$\Omega+\Omega^\top=0$
Conversely, for any skew-symmetric matrix $\Omega$, $D=B\Omega$ satisfies Eq.~\eqref{eq702}.
\end{proof}

\end{document}